\documentclass[reqno]{amsart}
\usepackage[a4paper, margin=2cm] {geometry}
\usepackage{etoolbox}

\patchcmd{\section}{\scshape\centering}{\bfseries\Large}{}{}
\makeatletter
\renewcommand{\@secnumfont}{\bfseries\Large}
\makeatother

\makeatletter

\def\enddoc@text{\ifx\@empty\@translators \else\@settranslators\fi}

\g@addto@macro\@setauthors{
  \par\vspace{2ex} 
  \@setaddresses
}
\makeatother

\patchcmd{\subsection}{\bfseries}{\bfseries\large}{}{}
\patchcmd{\subsection}{-.5em}{.5ex plus .2ex}{}{}

\usepackage[utf8]{inputenc}
\usepackage{amsmath}
\usepackage{siunitx}
\usepackage{amsfonts}
\usepackage{graphicx}
\usepackage{wrapfig}
\usepackage{subfig}
\usepackage{listings}
\usepackage{mathabx}
\usepackage{hyperref}
\usepackage{color} 
\usepackage{amssymb,amscd}
\usepackage{enumerate}
\usepackage{etoolbox}
\usepackage{tikz}
\usepackage{faktor}
\usepackage{graphicx}
\usepackage[utf8]{inputenc}
\usepackage{amsmath}
\usepackage{siunitx}
\usepackage{amsfonts}
\usepackage{graphicx}
\usepackage{wrapfig}
\usepackage{subfig}
\usepackage{listings}
\usepackage{mathabx}
\usepackage{scalerel}
\usepackage{stackengine}
\usepackage{color}
\usepackage{thmtools}
\usepackage{thm-restate}
\usepackage{mathtools}
\usepackage{enumitem}

\usepackage{overpic} 
\usepackage{url} 
\usepackage{comment} 
\newcommand\reallywidetilde[1]{\ThisStyle{
\setbox0=\hbox{$\SavedStyle#1$}
\stackengine{-.1\LMpt}{$\SavedStyle#1$}{
\stretchto{\scaleto{\SavedStyle\mkern.2mu\sim}{.5150\wd0}}{.6\ht0}
}{O}{c}{F}{T}{S}
}}

\newcommand{\rr}{\mathbb{R}}

\newcommand{\qq}{\overline{q}}

\numberwithin{equation}{section}

\newtheorem{theorem}{Theorem}
\newtheorem{lemma}[theorem]{Lemma}
\newtheorem{corollary}[theorem]{Corollary}
\newtheorem{proposition}[theorem]{Proposition}
\newtheorem{observation}[theorem]{Observation}
\theoremstyle{definition}

\newtheorem{problem}{Problem}

\newtheorem{definition}[theorem]{Definition}

\newcommand{\nn}{\overline{n}}
\newcommand{\mm}{\overline{m}}

\newcommand{\pp}{\overline{p}}

\newcommand{\lip}{\mathrm{Lip}_0}

\newcommand{\comp}{\overset{c}{\hookrightarrow}}
\newcommand{\FF}{\mathcal{F}}
\newcommand{\Ext}{\mathrm{Ext}}
\newcommand{\dk}{d^{\lVert \cdot \rVert_\infty}_k}
\newcommand{\Lip}{\mathrm{Lip}}

\newcommand{\dindex}[4][-1pt]{{#2}_{#3}^{\hspace{#1}#4}}
\newcommand{\bigslant}[2]{
  \mathchoice
    {\left.\raisebox{.2em}{$\displaystyle #1$}\middle/\raisebox{-.2em}{$\displaystyle #2$}\right.}
    {\left.\raisebox{.2em}{$\textstyle #1$}\middle/\raisebox{-.2em}{$\textstyle #2$}\right.}
    {\left.\raisebox{.1em}{$\scriptstyle #1$}\middle/\raisebox{-.1em}{$\scriptstyle #2$}\right.}
    {\left.\raisebox{.05em}{$\scriptscriptstyle #1$}\middle/\raisebox{-.05em}{$\scriptscriptstyle #2$}\right.}
}
\date{}
\usepackage{amsrefs}

\hypersetup{colorlinks=false}

\usepackage[nameinlink]{cleveref}

\crefformat{equation}{\begingroup\hypersetup{linkbordercolor=cyan}#2eq.~(#1)#3\endgroup}
\Crefformat{equation}{\begingroup\hypersetup{linkbordercolor=cyan}#2Eq.~(#1)#3\endgroup}

\crefformat{theorem}{\begingroup\hypersetup{linkbordercolor=red}#2theorem~#1#3\endgroup}
\Crefformat{theorem}{\begingroup\hypersetup{linkbordercolor=red}#2Theorem~#1#3\endgroup}

\crefformat{lemma}{\begingroup\hypersetup{linkbordercolor=red}#2lemma~#1#3\endgroup}
\Crefformat{lemma}{\begingroup\hypersetup{linkbordercolor=red}#2Lemma~#1#3\endgroup}

\crefformat{proposition}{\begingroup\hypersetup{linkbordercolor=red}#2proposition~#1#3\endgroup}
\Crefformat{proposition}{\begingroup\hypersetup{linkbordercolor=red}#2Proposition~#1#3\endgroup}

\crefformat{observation}{\begingroup\hypersetup{linkbordercolor=red}#2observation~#1#3\endgroup}
\Crefformat{observation}{\begingroup\hypersetup{linkbordercolor=red}#2Observation~#1#3\endgroup}

\newcommand{\eqcref}[1]{%
  \begingroup
  \hypersetup{linkbordercolor=cyan}%
  \eqref{#1}%
  \endgroup
}

\makeatletter

\pretocmd{\@setthanks}{\par\noindent}{}{}
\makeatother
\title[Lipschitz-free spaces over products of sequences]{Lipschitz-free spaces over products of sequences}
\author{Fraser Mason}
\address{Department of Pure Mathematics and Mathematical Statistics, Centre for Mathematical Sciences, University of Cambridge, 
  Wilberforce Road, Cambridge CB3 0WB, United Kingdom}
\email{fm531@cam.ac.uk}
\keywords{Lipschitz-free space, Lipschitz function}
\subjclass[2020]{Primary 46B03, 46E15; Secondary 26A16.}

\begin{document}

\begin{abstract}
    \noindent We answer positively a question of Aliaga and show that for any nonconstant real polynomial $p$, the Lipschitz-free space over $\{(p(n), p(m)):n, m\in \mathbb{N}\}$ is isomorphic to $\FF(\mathbb{Z}^2)$. We in fact show more generally that if $d\in \mathbb{N}$, $q\in \mathbb{Z}_{\geq 0}$, and $((\dindex{a}{n}{(i)})_{n=1}^\infty)_{i=1}^d$, $((\dindex{b}{m}{(j)})_{m=1}^\infty)_{j=1}^q$ are sequences with $0<\dindex{a}{1}{(i)}<\dindex{a}{2}{(i)}<\cdots$, $0<\dindex{b}{1}{(j)}<\dindex{b}{2}{(j)}<\cdots$, $\dindex{a}{n}{(i)}\to \infty$ as $n\to \infty$, $\frac{\dindex{a}{n+1}{(i)}}{\dindex{a}{n}{(i)}}\to 1$ as $n\to \infty$ and $\underset{m\to \infty}{\liminf}{\frac{\dindex{b}{m+1}{(j)}}{\dindex{b}{m}{(j)}}}>1$ for all $i, j$, then the Lipschitz-free space over the product of these $d+q$ sequences is isomorphic to $\FF(\mathbb{Z}^d)$.
\end{abstract}
\maketitle

\section{Introduction} 

\subsection{Background, motivation and main result}

\noindent Given a metric space $M$ and a basepoint $0_M\in M$, $\lip(M)$ denotes the space of Lipschitz functions $f:M\to \mathbb{R}$ with $f(0_M)=0$, and is a Banach space in the Lipschitz norm. For $x\in M$, the map $\delta_M(x):\lip(M)\to \mathbb{R}$ defined by $\delta_M(x)(f):=f(x)$ is an element of $\lip(M)^{\star}$, and the \emph{Lipschitz-free space} $\FF(M)$ is defined to be the subspace $\overline{\mathrm{span}}^{\lVert \cdot \rVert}\{\delta_M(x):x\in M\}\subset \lip(M)^{\star}$. The isometry class of both spaces is independent of basepoint. 
\noindent In this paper, we provide a positive solution to a problem raised by Aliaga on whether for every nonconstant real polynomial $p$, the Lipschitz-free space over $\{(p(m), p(n)):m, n\in \mathbb{N}\}$  is isomorphic to that over $\mathbb{Z}^2$ (see \cite{Aliaga}*{Question 6}). This follows from a particular case of the following theorem with $q=0$ and $d=2$, after first replacing $p$ by $-p$ if necessary to ensure a positive leading coefficient, adding a sufficiently large constant to $p$ to ensure that $p(n)>0$ for all $n$, and taking the strictly increasing enumeration of $(p(n))_{n=1}^\infty$.\\

\begin{restatable}{theorem}{thmone}
\label{thm1} Let $d,q\in \mathbb{Z}_{\geq 0}$ with $d+q\geq 1$. For each $1\leq i\leq d$, let $(\dindex{a}{n}{(i)})_{n=1}^\infty$ be a sequence such that $0<\dindex{a}{1}{(i)}<\dindex{a}{2}{(i)}<\cdots$, $\dindex{a}{n}{(i)}\to \infty$ as $n\to \infty$, and $\frac{\dindex{a}{n+1}{(i)}}{\dindex{a}{n}{(i)}}\to 1$ as $n\to \infty$. For each $1\leq j\leq q$, let $(\dindex{b}{m}{(j)})_{m=1}^\infty$ be a sequence such that $0<\dindex{b}{1}{(j)}<\dindex{b}{2}{(j)}<\cdots$ and $\underset{m\to \infty}{\liminf}{\frac{\dindex{b}{m+1}{(j)}}{\dindex{b}{m}{(j)}}}>1$. Let \[N=\{(\dindex{a}{n_1}{(1)}, \cdots, \dindex{a}{n_d}{(d)}, \dindex{b}{m_1}{(1)}, \cdots, \dindex{b}{m_q}{(q)}):n_i, m_j\in \mathbb{N}\}\subset \mathbb{R}^{d+q}.\]Then \[\FF(N)\sim \begin{cases}\FF(\mathbb{Z}^d) & \text{ if } d>0 \\ \ell_1 &\text{ if } d=0 \text{ and }q>0. \end{cases}\]
\end{restatable}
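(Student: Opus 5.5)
We prove the two inclusions $\FF(\mathbb{Z}^d)\hookrightarrow_c \FF(N)$ and $\FF(N)\hookrightarrow_c \FF(\mathbb{Z}^d)$ as complemented subspaces and then apply Pełczyński's decomposition method. For the method we use the known facts that $\FF(\mathbb{Z}^d)\sim \ell_1(\FF(\mathbb{Z}^d))$ (equivalently $\FF(\mathbb{Z}^d)\sim\FF(\mathbb{R}^d)$ for the relevant decompositions) and that $\FF(N)$ contains complemented $\ell_1$-sums of its own pieces. When $d=0$, $N$ is a product of lacunary sequences. We will show $N$ is uniformly discrete after rescaling and is a ``proper'' set in the sense of Kalton. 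A direct argument then gives $\FF(N)\sim\ell_1$: either decompose $N$ into dyadic shells $\{x: 2^k\le \|x\|_\infty<2^{k+1}\}$ that contain boundedly many points in each lacunary coordinate direction, or use the known result that the free space over a uniformly discrete set admitting a bounded-distortion embedding into a doubling net with lacunary structure is $\ell_1$. In any case the shells are uniformly finite-dimensional with a uniform $\ell_\infty^n$-type structure, and Kalton's decomposition $\FF(N)\lesssim(\sum_k \FF(N_k))_{\ell_1}$ yields $\ell_1$.

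\textbf{Step 1 (reduction to $q=0$).} Since the $b$-coordinates are lacunary, the map $(a,b)\mapsto b$ fibres $N$ into copies $N_0\times\{b\}$, where $N_0$ is the product of the $a$-sequences. Using the lacunarity of the $b$-coordinates we construct a Lipschitz retraction-type decomposition. Group points by the largest lacunary coordinate scale and apply Kalton's lemma in the form $\FF(N)\sim(\sum_k \FF(N\cap A_k))_{\ell_1}$ over annuli $A_k$ at scale $2^k$. The pieces are then products of bounded pieces of $N_0$ with finitely many $b$-values, each separated at scale comparable to $2^k$. This gives $\FF(N)\lesssim(\sum \FF(\text{subsets of scaled }N_0))_{\ell_1}\oplus\ell_1$. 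The reverse containment is immediate, since $N_0\times\{b^{(0)}\}\subset N$ is a subset that is complemented via a Lipschitz retraction composed with the projection onto the fibre and then the nearest-point map. Pełczyński then reduces the problem to showing $\FF(N_0)\sim\FF(\mathbb{Z}^d)$.

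\textbf{Step 2 (the case $q=0$).} Since $a_{n+1}/a_n\to1$, at scale $R$ the sequence $(a_n)$ restricted to $[R,2R]$ has gaps $o(R)$. Hence $N_0\cap[R,2R]^d$ is a $\varepsilon_R R$-net of the cube with $\varepsilon_R\to0$, though it is not uniformly discrete in a scale-invariant way. The plan is to show that $N_0$ contains, with complementation, rescaled copies of $\{0,\dots,K\}^d$ for every $K$. Since the gaps tend to be tiny relative to position but not necessarily bounded, we pick sparse subsequences that are bi-Lipschitz to grids with uniform constants. This gives $\FF(\mathbb{Z}^d)\lesssim\FF(N_0)$, using that $\FF(\mathbb{Z}^d)$ is isomorphic to the $\ell_1$-sum of free spaces over finite grids, which holds via Kaufmann's/Kalton's results. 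Conversely, we embed $N_0$ into a union of grids of varying mesh. On each dyadic shell, $N_0$ is a product of 1-dimensional sets, and a coordinatewise Lipschitz map sends each factor onto a subset of a uniform grid with bounded distortion at the local scale. Gluing these across shells, again by Kalton's decomposition, gives $\FF(N_0)\lesssim(\sum_k\FF(G_k))_{\ell_1}$ with $G_k$ finite grid-subsets, and this is complemented in $\FF(\mathbb{Z}^d)$.

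\textbf{Main obstacle.} The hardest part is the complementation of $\FF$ of an arbitrary subset of a grid. A product of arbitrary 1-dimensional nets is not a grid, so we need Lipschitz retractions from $\mathbb{Z}^d$ (or $\mathbb{R}^d$) onto product-type subsets. I would first try coordinatewise piecewise-linear retractions $\mathbb{R}\to\{a_n\}$. Their product is a Lipschitz retraction of $\mathbb{R}^d$ onto a product set, but it is not Lipschitz uniformly when gaps vary. So I would combine this with the known fact that $\FF(\mathbb{R}^d)$-complemented nets exist (Lancien–Pernecká style extension operators for nets), controlling the constants through the ratio condition $a_{n+1}/a_n\to1$, which makes the gaps locally almost constant within each shell.
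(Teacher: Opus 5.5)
There is a genuine gap, and it is in Step 1, which is where the actual difficulty of the theorem lies. You claim that each annular piece consists of bounded pieces of $N_0$ over finitely many $b$-values, "each separated at scale comparable to $2^k$". This is false. Every annulus at scale $2^k$ contains layers over \emph{all} the small lacunary values $b_1<b_2<\cdots$, because the $\ell_\infty$-norm can be attained in an $a$-coordinate. For example, the layers over $b_1$ and $b_2$ are at distance $b_2-b_1=O(1)$, while each has diameter of order $2^k$. So there is no well-separated splitting, and nothing lets you write $\FF$ of such a stack as an $\ell_1$-sum of free spaces over pieces of $N_0$. Showing that these stacked layers do not add dimension is the whole content of the theorem.

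The paper handles this in Lemma~2. After replacing the lacunary factors by $\{(1+\varepsilon)^m\}$, it collapses the bottom layer $A$ by a $1$-Lipschitz retraction and uses Kaufmann's decomposition $\FF(M)\sim\FF(M/A)\oplus_1\FF(A)$. In $M/A$ the layers \emph{are} well-separated with respect to the point $A$, but each layer now carries the truncated metric $\min\{\lVert p-r\rVert_\infty, 2((1+\varepsilon)^m-(1+\varepsilon))\}$. One therefore needs $\FF(\mathbb{Z}^d,\min\{\lVert\cdot\rVert_\infty,k\})$ to be complemented in $\FF(\mathbb{Z}^d)$ uniformly in $k$. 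This is not a statement about subsets, since that space is neither doubling nor embeddable in $\mathbb{Z}^d$, so Lee--Naor does not give it. The paper proves it by splitting $\lip(\mathbb{Z}^d,d_k)$ over the faces of the tiling by $k$-cubes into boundary-vanishing parts and checking $w^\star$-continuity. The induction on $q$ then has to carry this truncated version along. None of this appears in your proposal.

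Your $d=0$ argument has the same problem. The dyadic shells are not uniformly finite: for $\{2^m\}^2$ the $k$-th shell has $2k-1$ points, since one coordinate ranges over all smaller values. What is needed is a \emph{uniform} isomorphism of $\FF(\text{shell})$ with $\ell_1^n$, which is again the nontrivial part, and the ``known result'' you invoke is not available in that form. The paper instead deduces $d=0$ from $d=1$ via $\FF(N)\comp\FF(\mathbb{Z}\times N)\sim\FF(\mathbb{Z})\cong\ell_1$.

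Conversely, what you call the main obstacle is not one. Any finite $F\subset\mathbb{R}^d$ is, after rescaling, $2$-bilipschitz to a subset of $\mathbb{Z}^d$. Lee--Naor extension operators make free spaces over subsets of $\mathbb{Z}^d$ uniformly complemented in $\FF(\mathbb{Z}^d)$. So Kalton's decomposition into (finite) balls gives $\FF(N_0)\comp\FF(\mathbb{Z}^d)$ at once, with no coordinatewise retractions and no use of the ratio condition. The condition $a_{n+1}/a_n\to 1$ is needed only for $\FF(\mathbb{Z}^d)\comp\FF(N_0)$. Your sketch there is fine once you use a common step in all coordinates and make the grid copies well-separated.

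Finally, $\FF(\mathbb{Z}^d)\not\sim\FF(\mathbb{R}^d)$, since $\FF(\mathbb{Z}^d)$ has the Schur property, so drop that parenthetical.
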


\noindent We now discuss some background and motivation. Recall that $\FF(M)$ separates the points of $\lip(M)$ and the closed unit ball $B_{\lip(M)}$ is compact in the initial topology $\sigma(\lip(M), \FF(M))$, so $\FF(M)^\star$ is isometrically isomorphic to $\lip(M)$. The map $\delta_M:M\to \FF(M)$ is an isometric embedding, and $\FF(M)$ is a Banach space that linearises the Lipschitz structure of $M$: given any metric space $N$ with basepoint $0_N$ and any Lipschitz map $f:M\to N$ such that $f(0_M)=0_N$, there is a unique bounded linear map $\hat{f}:\mathcal{F}(M)\to \mathcal{F}(N)$ which extends $f$, and moreover $\lVert \hat{f} \rVert=\lVert f \rVert_{\Lip}$. Lipschitz-free spaces are related to the nonlinear theory of Banach spaces. In \cite{normedspaces}, it was proved that for any Banach space $X$ and net $N$ in $X$, $\lip(X)$ is isomorphic to a complemented subspace of $\lip(N)$ and $\lip(N)\sim \left (\bigoplus\limits_{n=1}^\infty \lip(N) \right )_{\ell_\infty}$. If the reverse complementation were proved, then by Pe\l{}czy\'{n}ski decomposition $\lip(N)\sim \lip(X)$. As observed in \cite{isolipschitzspace}, this would imply that uniformly homeomorphic Banach spaces have isomorphic spaces of Lipschitz functions. In \cite{Godefroy}, it was asked whether $\FF(\ell_1)$ is complemented in $\FF(\ell_1)^{\star\star}$, and it was noted that a positive answer would prove that any Banach space $X$ which is Lipschitz-isomorphic to $\ell_1$ is in fact linearly isomorphic to $\ell_1$. \\

\noindent Determining functional-analytic properties of $\FF(M)$ arising from metric properties of $M$ has revealed connections between Lipschitz-free spaces and both geometric measure theory and the theory of Lipschitz extensions. Relations between properties of a Lipschitz map $f$ and those of the linearisation $\hat{f}$ have also been studied. In \cite{LancienPernecka}, Lancien and Perneck\'a showed that for any doubling metric space $M$, $\FF(M)$ has the bounded approximation property. This depended on the existence of Lipschitz extension operators in doubling metric spaces, which was proved by Lee and Naor in \cite{LeeNaor}. In \cite{unrectifiable}, Aliaga, Gartland, Petitjean and Proch\'azka showed that for a complete metric space $M$, pure 1-unrectifiability of $M$ is equivalent to $\FF(M)$ having the Schur property, which is equivalent to $\FF(M)$ having the Radon-Nikod\'ym property. In \cite{curve flat}, Flores, Jung, Lancien, Petitjean,  Proch\'azka and Quilis proved that for a basepoint-preserving Lipschitz map $f:M\to N$ between pointed metric spaces with $M$ complete, curve-flatness of $f$ is equivalent to the linearised map $\hat{f}$ being a Radon-Nikod\'ym operator, which is equivalent to $\hat{f}$ being a Dunford-Pettis operator. \\

\noindent There has been much research on Lipschitz spaces and Lipschitz-free spaces over subsets of $\rr^n$, and some fundamental questions remain open. By Naor and Schechtman \cite{planarearthmover}, for $n\geq 2$, $\FF(\rr^n)$ is not isomorphic to a subspace of $\FF(\rr)$. Currently, it is unknown if $\FF(\rr^2)$ and $\FF(\rr^3)$ are isomorphic, and the questions of whether for $m\neq n$  both at least $2$,  $\FF(\rr^m)$ and $\FF(\rr^n)$ can be isomorphic, or if $\lip(\rr^m)$ and $\lip(\rr^n)$ can be isomorphic remain open. In \cite{isolipschitzspace} it was shown by Candido, C\'uth and Doucha that for every $d\in \mathbb{N}$, $\lip(\mathbb{Z}^d)$ and $\lip(\rr^d)$ are isomorphic. In \cite{Aliaga}, Aliaga generalised this and showed that if $M\subset \mathbb{R}^d$ is non-porous (in particular if $M$ is Lebesgue measurable with positive Lebesgue measure), then $\lip(M)$ is isomorphic to $\lip(\mathbb{R}^d)$. Aliaga notes that this implies that for any nonconstant real polynomial $p$, $\lip(\{(p(n), p(m)):n, m\in \mathbb{N}\})\sim \lip(\mathbb{Z}^2)$, and also proves that $\FF(\{(2^n, 2^m): n, m\in \mathbb{N}\})\sim \ell_1$. \cite{Aliaga}*{Question 6} asks whether the first isomorphism also holds for Lipschitz-free spaces. The $q=0$, $d=2$ case of Theorem \ref{thm1} implies that the answer is positive, and the $d=0$ case generalises the sparse sequences example. Theorem \ref{thm1} considers the natural generalisation to the case where both sub-exponentially growing sequences with consecutive ratios converging to $1$ and sparse sequences appear as factors. Note that the sequences $(\dindex{a}{n}{(i)})_{n=1}^\infty$ are allowed to grow much faster than polynomially - for example, $\exp\left(\frac{n}{\log\log n}\right)$ growth is permitted. \\

\subsection{Structure and overview}

\noindent The structure of this paper is as follows. In Section $2$, we collect several results from the literature that will be required for the proof of Theorem \ref{thm1}. In Section 3, we first prove the following key lemma. In the statement given below the notation $\dk$ denotes the truncated metric on $\rr^{d+q}$ given by the minimum of the $\ell_\infty$-norm and $k$. \\

\begin{restatable}{lemma}{lemtwo}
\label{lem2}
Let $\varepsilon>0$, $d\in \mathbb{N}$ and $q\in \mathbb{Z}_{\geq 0}$. Then there exists a constant $C=C_{\varepsilon, d, q}\geq 1$ depending only on $\varepsilon$, $d$ and $q$ such that the following hold.
\begin{enumerate}[label=(\roman*)]
    \item $\FF\left (\mathbb{Z}^d\times \{(1+\varepsilon)^m:m\in \mathbb{N}\}^q, \lVert \cdot \rVert_\infty\right )$ is $C$-isomorphic to a $C$-complemented subspace of $\FF\left(\mathbb{Z}^d, \lVert \cdot \rVert_\infty\right)$.
    \item $\forall k\in \mathbb{N}$,  $\FF\left (\mathbb{Z}^d\times \{(1+\varepsilon)^m:m\in \mathbb{N}\}^q, d^{\lVert \cdot \rVert_\infty}_k\right )$ is $C$-isomorphic to a $C$-complemented subspace of $\FF\left(\mathbb{Z}^d, \lVert \cdot \rVert_\infty \right)$.
\end{enumerate} 
\end{restatable}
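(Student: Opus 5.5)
I will prove (i) first and then obtain (ii) by running the same construction with the truncated metric; by scaling, the key case is $q=1$, and general $q$ follows by induction. Write $B=\{(1+\varepsilon)^m:m\in\mathbb{N}\}$ and $M=\mathbb{Z}^d\times B^q$. The strategy is to realise $M$, up to a bi-Lipschitz distortion depending only on $\varepsilon,d,q$, as a subset of $\mathbb{Z}^d$ (or of $\mathbb{Z}^d\times\mathbb{Z}^q$ with lattice structure) in such a way that a bounded linear projection can be built. Since $\FF(\mathbb{Z}^{d+q})$ is not expected to embed complementably into $\FF(\mathbb{Z}^d)$, I will not embed $M$ into a higher-dimensional lattice. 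Instead I will exploit the sparseness of $B$. The points with $b$-coordinate $(1+\varepsilon)^m$ form a copy of $\mathbb{Z}^d$, and distinct levels are separated by about $\varepsilon(1+\varepsilon)^m$.

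\textbf{Decomposition.} I decompose $M$ into dyadic-type annuli in the $B$-direction. For fixed $m$, the slice $S_m=\mathbb{Z}^d\times\{(1+\varepsilon)^m\}$ is isometric to $\mathbb{Z}^d$. The key geometric observation is that moving between slices $m$ and $m'$ costs at least $c_\varepsilon\max((1+\varepsilon)^m,(1+\varepsilon)^{m'})$. Hence I will use the standard retraction technique from Section 2 (the results of Kalton and Aliaga type on decomposing $\FF(M)$ over well-separated pieces, e.g.\ $\FF(M)\sim \FF(\text{net})\oplus(\bigoplus \FF(\text{pieces}))_{\ell_1}$). This should give
\[\FF(M)\sim_{C}\Bigl(\bigoplus_{m}\FF(S_m\cap \text{appropriate region})\Bigr)_{\ell_1}\oplus \FF(\text{skeleton}),\]
where the skeleton is a uniformly discrete, doubling-like set whose free space is isomorphic to a complemented subspace of $\ell_1$. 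Concretely, I would map $(x,(1+\varepsilon)^m)$ to $x$ plus a basepoint shift placed far away. That is, I place the slices $S_m$ as disjoint translates of $\mathbb{Z}^d$ inside $\mathbb{Z}^d$, each separated from the others by distances comparable to $(1+\varepsilon)^m$. I would use sectors or cubes of $\mathbb{Z}^d$ of side about $(1+\varepsilon)^m$, combined with the result that $\FF(\mathbb{Z}^d)\sim(\bigoplus\FF(\mathbb{Z}^d))_{\ell_1}$ and that $\ell_1$ embeds complementably in $\FF(\mathbb{Z}^d)$ (both of which I take from Section 2). Then $\FF(M)$ is $C$-isomorphic to a complemented subspace of $(\bigoplus_m\FF(\mathbb{Z}^d))_{\ell_1}\oplus\ell_1$, which in turn is complemented in $\FF(\mathbb{Z}^d)$ with constants depending only on $d$.

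\textbf{Main obstacle.} The hardest step is the first decomposition: producing Lipschitz retractions or extension operators that split $\FF(M)$ into slices with constants independent of $m$. Slice $S_m$ is an infinite copy of $\mathbb{Z}^d$, whereas the separation between slices is only proportional to $(1+\varepsilon)^m$. Far out in the $\mathbb{Z}^d$ directions, slice $m$ points are therefore close, relative to their norm, to slice $m+1$ points. I would handle this by the Kalton-type decomposition. For each point, I compare the $\ell_\infty$-scale of its $\mathbb{Z}^d$ part with $(1+\varepsilon)^m$. In the region where $|x|_\infty\lesssim(1+\varepsilon)^m$, the slice is genuinely separated and is treated as a piece. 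In the region $|x|_\infty\gg(1+\varepsilon)^m$, the level $m$ is just a bounded-ratio perturbation, and there I would collapse levels using a Lipschitz map to $\mathbb{Z}^d\times\{\text{level}\le \log|x|\}$, which is bi-Lipschitz equivalent to a subset of $\mathbb{Z}^d$ with a Lipschitz retraction. Gluing the two regimes uses the Lee--Naor extension (the doubling property holds uniformly in $m$) together with the Lancien--Perneck\'a style partition-of-unity operators.

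\textbf{Part (ii).} For (ii) the same maps work with $d_k^{\lVert\cdot\rVert_\infty}$, since truncation at level $k$ only makes the slice separation trivial for $(1+\varepsilon)^m\ge k$. Finitely many levels then behave like the bounded case, and levels beyond $k$ all sit at mutual distance $k$. This yields a space of the form $\FF(\mathbb{Z}^d,\min(\lVert\cdot\rVert_\infty,k))$ summed in $\ell_1$. I would check that this is uniformly complemented in $\FF(\mathbb{Z}^d)$ via the known isomorphism of free spaces over truncated lattices with $\ell_1$-sums of free spaces over cubes, all with constants independent of $k$.
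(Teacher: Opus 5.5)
There is a genuine gap, and it is at the step you flag as the main obstacle.

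\textbf{The far-region step.} Your treatment of the region $\lVert x\rVert_\infty\gg(1+\varepsilon)^m$ rests on the claim that the set of points $(x,(1+\varepsilon)^m)$ with $m\lesssim\log\lVert x\rVert_\infty$ is bi-Lipschitz equivalent to a subset of $\mathbb{Z}^d$. This is false. Above each $x$ with $\lVert x\rVert_\infty\approx R$ it still contains $\asymp\log R$ levels, so a ball of radius $\asymp R$ contains $\asymp R^d\log R$ points. These points are $\min\{1,\varepsilon(1+\varepsilon)\}$-separated, whereas a uniformly discrete subset of $\mathbb{R}^d$ has only $O(R^d)$ points in such a ball. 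The same count shows that $\mathbb{Z}^d\times\{(1+\varepsilon)^m:m\in\mathbb{N}\}$ has no bi-Lipschitz copy in $\mathbb{Z}^d$, which is exactly why the complementation must be built at the level of free spaces.

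The other ingredients of your decomposition are also unsupported. Uniform discreteness plus doubling does not make the free space of your ``skeleton'' complemented in $\ell_1$; $\mathbb{Z}^2$ is a counterexample. ``Gluing the regimes with Lee--Naor'' is not an argument. Finally, collapsing levels by a Lipschitz retraction $r:M\to A$ only gives $\FF(M)\sim\FF(\bigslant{M}{A})\oplus_1\FF(A)$, and you never analyse the quotient, which is where the proof actually lives.

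The missing idea is to retract $1$-Lipschitzly onto the bottom level $A=\mathbb{Z}^d\times\{(1+\varepsilon)^n:n\in\mathbb{N}\}^q\times\{1+\varepsilon\}$.
\begin{itemize}
\item In $\bigslant{M}{A}$, far-out points are close to the collapsed base point. So the slices $A_m$, $m\ge2$, become well-separated with respect to $A$, with constant $\frac{\varepsilon}{2(1+\varepsilon)}$.
\item The quotient metric on each $A_m$ is $\min\{\lVert p-r\rVert_\infty,r_m\}$, with $r_m$ bounded below in terms of $\varepsilon$. This is a \emph{truncated} metric on $\mathbb{Z}^d\times\{(1+\varepsilon)^n:n\in\mathbb{N}\}^q$.
\end{itemize}
So the obstacle you describe disappears in the quotient. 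The price is that (i) for $q+1$ requires (ii) for $q$. Hence (i) and (ii) must be proved jointly by induction on $q$, not (i) first and (ii) afterwards.

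\textbf{Part (ii).} Your plan for (ii) has two further gaps.

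First, the ``known isomorphism'' of free spaces over truncated lattices with $\ell_1$-sums of free spaces over cubes, with constants independent of $k$, is not an available tool. It is the $q=0$ case of (ii), which is the main technical content of the lemma. The paper proves it by hand:
\begin{itemize}
\item It chooses extension operators from the discrete boundary of every face $C^{d-j}_{\mm,A}$ of side $k$, of each dimension $1,\dots,d$, controlling both the Lipschitz norm and the sup norm.
\item This yields $\lip(\mathbb{Z}^d,\dk)\overset{C_d}{\sim}\bigl(\bigoplus\lip(\bigslant{C^{d-j}_{\mm,A}}{\partial C^{d-j}_{\mm,A}})\bigr)_{\ell_\infty}\oplus_\infty\ell_\infty(\mathbb{Z}^d\setminus\{0\})$, with the inverse built one face-dimension at a time.
\item It checks $w^\star$-to-$w^\star$ continuity to pass to preduals, and concludes via Propositions \ref{quotientquantitative} and \ref{Z^d l_1 sum}.
\end{itemize}

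Second, for $q\ge1$ your reduction does not give constants independent of $k$. Slices above height roughly $k/\varepsilon$ are indeed pairwise at distance $k$. Below that height, however, there are $\asymp\log_{1+\varepsilon}(k/\varepsilon)$ levels, a number unbounded in $k$, each an infinite copy of $(\mathbb{Z}^d,\dk)$. So ``finitely many levels behave like the bounded case'' is the same uniformity problem again, and the quotient argument above is what resolves it.
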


\noindent We will then prove Theorem \ref{thm1}. We now give an overview of the proofs of Theorem \ref{thm1} in the case $d>0$ and Lemma \ref{lem2}. For Banach spaces $X$ and $Y$, the notation $X\comp Y$ denotes that $X$ is isomorphic to a complemented subspace of $Y$.
\begin{itemize}
    \item By Pe\l{}czy\'{n}ski decomposition, and using the known fact that $\FF(\mathbb{Z}^d)\sim \left (\bigoplus\limits_{n=1}^\infty\FF(\mathbb{Z}^d) \right )_{\ell_1}$, it suffices to prove that $\FF(\mathbb{Z}^d)\comp \FF(N)$ and $\FF(N)\comp \FF(\mathbb{Z}^d)$. To show the easier embedding $\FF(\mathbb{Z}^d)\comp\FF(N)$, letting $M=\{(\dindex{a}{n_1}{(1)}, \cdots, \dindex{a}{n_d}{(d)}):n_1, \cdots, n_d\in \mathbb{N}\}$, it suffices to show that $\FF(\mathbb{Z}^d)\comp \FF(M)$. To do this we find a sequence $(c_n)_{n=1}^\infty$ growing slightly faster than all of the $(\dindex{a}{n}{(i)})_{n=1}^\infty$ and containing a bilipschitz copy of each, which allows us to reduce to the case of all sequences being equal to $(c_n)_{n=1}^\infty$. By a result of Kalton from \cite{Kalton}, it will follow that $\FF(\mathbb{Z}^d)\comp \left (\bigoplus\limits_{k=1}^\infty  \FF\left(\{0, 1, \cdots, k\}^d\right)\right )_{\ell_1}$, so we only need to find a complemented copy of $\left (\bigoplus\limits_{k=1}^\infty  \FF\left(\{0, 1, \cdots, k\}^d\right)\right )_{\ell_1}$ in $\FF(M)$. The condition $\frac{c_{n+1}}{c_n}\to 1$ allows us to find a sequence $(A_k)_{k=1}^\infty$ of subsets of $\{c_n:n\in \mathbb{N}\}$ that are pairwise very far apart, each of which is uniformly bipilischitz equivalent to a $k+1$ term arithmetic progression. These subsets being at large distance from each other then implies that $\left (\bigoplus\limits_{k=1}^\infty  \FF(A_k^d)\right )_{\ell_1}\comp \FF(M)$, and the uniform equivalence with arithmetic progressions implies that the first space is isomorphic to  $\left (\bigoplus\limits_{k=1}^\infty  \FF\left(\{0, 1, \cdots, k\}^d\right)\right )_{\ell_1}$.
    \item The main ingredient in the proof of the harder embedding $\FF(N)\comp \FF(\mathbb{Z}^d)$ is Lemma \ref{lem2}, which is proved by induction on $q$, with the main argument being part $(ii)$ in the case $q=0$. The inductive step in the proof of Lemma \ref{lem2} is as follows: Write $M=\mathbb{Z}^d\times \{(1+\varepsilon)^m:m\in \mathbb{N}\}^{q+1}$ in the appropriate metric and let $A=\mathbb{Z}^d\times \{(1+\varepsilon)^m:m\in \mathbb{N}\}^q\times \{1+\varepsilon\}$. By a known quotient decomposition from \cite{Kaufmann}, it suffices to show the complementation for $\FF(A)$ and $\FF \left (\bigslant{M}{A} \right )$, where $\bigslant{M}{A}$ is the result of collapsing all of $A$ to a point. This will be defined in Section 2. The first of these complementations follows by the induction hypothesis. For the second, $\bigslant{M}{A}$ can be divided into slices corresponding to the final coordinate, which are sufficiently separated in $\bigslant{M}{A}$ that $\FF\left (\bigslant{M}{A} \right)$ can be written as the $\ell_1$ direct sum of the free spaces over these slices. For each slice we can apply the induction hypothesis, which concludes the inductive step since $\FF(\mathbb{Z}^d)\sim \left (\bigoplus\limits_{n=1}^\infty \FF(\mathbb{Z}^d) \right )_{\ell_1}$.
    \item For the $q=0$ case of Lemma \ref{lem2} part $(ii)$, we construct a $w^\star$-to-$w^\star$ continuous isomorphism between $\lip(\mathbb{Z}^d, d_k^{\lVert \cdot \rVert_\infty})$ and (approximately) an $\ell_\infty$ direct sum of Lipschitz spaces over $d$-dimensional discrete cubes of side length $k$ and their faces of all lower dimensions between $0$ and $d$. This is essentially done by restriction, with the caveat that at each dimension of faces we  use Lipschitz extension operators to ensure vanishing on the boundary of the appropriate cube face. This does not lose any information since we keep track of information on all dimensions of faces, and gives us the desired isomorphism. This passes to an isomorphism of preduals between $\FF(\mathbb{Z}^d, d_k^{\lVert \cdot \rVert_\infty})$ and (roughly) an $\ell_1$ direct sum of free spaces over these discrete cubes and faces. Each of these will be uniformly complemented in $\FF(\mathbb{Z}^d)$ which completes this case.
    \item Returning to the proof that $\FF(N)\comp \FF(\mathbb{Z}^d)$, by the aforementioned  result of Kalton (which will be stated in Section 2), $\FF(N)\comp\left ( \bigoplus\limits_{k=1}^\infty \FF(B_k) \right )_{\ell_1}$, where $(B_k)_{k=1}^\infty$ are concentric balls of radius $2^k$ in $N$. By taking $\varepsilon$ to be sufficiently small in Lemma \ref{lem2}, each $(\dindex{b}{m}{(j)})_{m=1}^\infty$ is bilipschitz equivalent to a subset of $\{(1+\varepsilon)^m:m\in \mathbb{N}\}$, and each $B_k$ is uniformly bilipschitz equivalent to a subset of $\mathbb{Z}^d\times \{(1+\varepsilon)^m:m\in \mathbb{N}\}^q$. This will imply by Lemma 2 that $\FF(B_k)$ is uniformly complemented in $\FF(\mathbb{Z}^d)$, which allows us to conclude. 
\end{itemize}

\section{Some results from the literature}

\noindent \textbf{Notation:} Recall that for Banach spaces $X$ and $Y$, we write $X \overset{c}{\hookrightarrow} Y$ to mean that $X$ is isomorphic to a complemented subspace of $Y$. Note that for Banach spaces $X, Y, Z$, if $X \overset{c}\hookrightarrow Y$ and $Y \overset{c}\hookrightarrow Z$, then $X \overset{c}\hookrightarrow Z$. For $C \geq 1$, we write $X \overset{C}{\sim} Y$ to denote that $X$ and $Y$ are $C$-isomorphic (i.e. if there exists $T:X\to Y$ a bounded linear bijection with $\lVert T \rVert\cdot \lVert T^{-1} \rVert\leq C$). Write $X\cong Y$ to denote that $X$ and $Y$ are isometrically isomorphic. For $C\geq 1$, a closed subspace $X$ of a Banach space $Y$ is $C$-complemented in $Y$ if there exists a bounded linear projection $P$ of $Y$ onto $X$ with $\lVert P \rVert\leq C$. We note that if $(X_n)_{n\geq 1}$, $(Y_n)_{n\geq 1}$ are sequences of Banach spaces such that for every $n$, $X_n$ is $C$-isomorphic to a $C$-complemented subspace of $Y_n$, then $\left (\bigoplus\limits_{n=1}^\infty X_n \right )_{\ell_1}$ is $C$-isomorphic to a $C$-complemented subspace of $\left ( \bigoplus\limits_{n=1}^\infty Y_n \right )_{\ell_1}$. 
For a metric space $(M, d)$, $x\in M$ and $r>0$, we write $\overline{B}(x, r)=\{y\in M:d(x, y)\leq r\}$ for the closed ball of radius $r$ in $M$ centred at $x$. 
For a metric space $M$, a basepoint $0\in M$ and a closed subset $N$ of $M$ with $0\in N$, we write $\Lip_N(M)$ for $\{f\in \lip(M):f\vert_N=0\}$, which is a closed subspace of $\lip(M)$ in the Lipschitz norm. We also write $\Lip(M)$ for the vector space of real valued Lipschitz functions on $M$. For a metric space $M$, if we take a basepoint $q\in M$ and an ambient point $0\in M$ already exists (for example if $M$ is a Banach space), we may sometimes write $\Lip_q(M)$ instead of $\lip(M)$ to avoid confusion.
\noindent For $k\in \mathbb{N}$, we write $\dk$ for the truncated metric on $\mathbb{R}^m$ (for any $m$) given by \[ \dk(p, q)=\min\{\lVert p-q\rVert_\infty, k\} \text{ for all } p, q\in \rr^m.\] For $n\in \mathbb{N}$ we write $[n]$ for $\{1, 2, \cdots, n\}$ and for $k\in \mathbb{N}_0$ we write $[n]^{(k)}$ for the family of subsets of $[n]$ of cardinality $k$. \\

\noindent The following is a separation condition that gives an $\ell_1$ direct sum decomposition for Lipschitz-free spaces.

\begin{definition} 
 \noindent Let $(M, d)$ be a metric space, $x_0 \in M$ and $\mathcal{C}=(C_i)_{i\in I}$ be an indexed family of subsets of $M$. \begin{itemize}
     \item For $0<\lambda\leq 1$, $\mathcal{C}$ is said to be \emph{well-separated with respect to $x_0$ with constant $\lambda$} if for for every $i,j\in I$ with $i\neq j$, every $x \in C_i$ and every $y \in C_j$, we have $d(x, y) \geq \lambda (d(x, x_0) + d(y, x_0))$. $\mathcal{C}$ is said to be \emph{well-separated with respect to $x_0$} if $\exists 0<\lambda\leq 1$ such that $\mathcal{C}$ is well-separated with respect to $x_0$ with constant $\lambda$.
     \item $\mathcal{C}$ is \emph{well-separated} if $\exists x_0 \in M$ such that $\mathcal{C}$ is well-separated with respect to $x_0$.
 \end{itemize} 
\end{definition}

\noindent The motivation for considering this notion of separation is that defining a Lipschitz function on the union becomes very simple: given an indexed family $(C_i)_{i\in I}$ that is well-separated with respect to $x_0$, defining a Lipschitz function on $\left (\bigcup\limits_{i\in I}C_i\right )\cup\{x_0\}$ that vanishes at $x_0$ is precisely the same as doing so on $C_i\cup\{x_0\}$ for each $i\in I$ in such a way that the Lipschitz constants are uniformly bounded. \\

\noindent The following lemma is a quantitative version of this and has been observed by various authors (for example the $p=1$ case of \cite{embeddability bases}*{Lemma 2.1}, \cite{remarksfreespace}*{Proposition 2}, or Proposition 5.1 from the arXiv version of \cite{Kaufmann}).

 \begin{lemma}\label{well separated} Let $(M, d)$ be a metric space, $x_0\in M$, $0<\lambda\leq 1$ and $(M_n)_{n\geq 1}$ be a sequence of subsets of $M$ that are well-separated with respect to $x_0$ with constant $\lambda$. Then
 \[\mathcal{F}\left (\bigcup\limits_{n=1}^\infty M_n\cup\{x_0\} \right )\overset{\lambda^{-1}}{\sim}\left (\bigoplus\limits_{n=1}^\infty \mathcal{F}(M_n\cup\{x_0\}) \right )_{\ell_1}.\]  
 \end{lemma}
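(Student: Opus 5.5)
Write $M_0=\bigcup_n M_n\cup\{x_0\}$ and take $x_0$ as the basepoint throughout. I will prove the isomorphism on the dual side and then check that it is weak$^\star$-to-weak$^\star$ continuous, so that it comes from an isomorphism of the preduals.

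The natural candidate is the restriction map
\[R:\lip(M_0)\to \Bigl(\bigoplus_{n=1}^\infty \lip(M_n\cup\{x_0\})\Bigr)_{\ell_\infty},\qquad Rf=(f\vert_{M_n\cup\{x_0\}})_{n}.\]
It is linear, and since each restriction has Lipschitz constant at most $\lVert f\rVert_{\Lip}$, we get $\lVert R\rVert\le 1$. It is injective because $M_0$ is exactly the union of the sets $M_n\cup\{x_0\}$.

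For surjectivity, take $(f_n)$ with $\sup_n\lVert f_n\rVert_{\Lip}\le 1$ and glue: set $f(x)=f_n(x)$ for $x\in M_n$ and $f(x_0)=0$. This is well defined, since $M_n\cap M_m=\emptyset$ for $n\neq m$. Indeed, a common point $x$ would give $0=d(x,x)\ge 2\lambda d(x,x_0)$, forcing $x=x_0$. The point $x_0$ itself may lie in the $M_n$, but every $f_n$ vanishes there, so there is no conflict.

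The key estimate is the Lipschitz bound across pieces. Let $x\in M_n$ and $y\in M_m$ with $n\neq m$. Then
\[|f(x)-f(y)|\le |f_n(x)|+|f_m(y)|\le d(x,x_0)+d(y,x_0)\le \lambda^{-1}d(x,y).\]
Within a single piece the bound is at most $1$, and pairs involving $x_0$ are covered by each $f_n$ individually. Hence $\lVert f\rVert_{\Lip}\le\lambda^{-1}$, so $R$ is onto and $\lVert R^{-1}\rVert\le\lambda^{-1}$.

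It remains to show $R$ is weak$^\star$ continuous, so that $R=S^\star$ for an isomorphism $S:\bigl(\bigoplus\FF(M_n\cup\{x_0\})\bigr)_{\ell_1}\to\FF(M_0)$ with the same constants. The dual of the $\ell_1$-sum is the $\ell_\infty$-sum, and on bounded sets its weak$^\star$ topology is coordinatewise pointwise convergence. On bounded sets of $\lip(M_0)$, weak$^\star$ convergence is pointwise convergence, and $R$ clearly preserves this. By Banach--Dieudonné (Krein--Smulian), weak$^\star$ continuity on bounded sets suffices.

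More concretely, I would define $S$ on finitely supported sequences by $S(\delta_{M_n\cup\{x_0\}}(x))_n=\delta_{M_0}(x)$, sending each summand through the linearisation of the isometric inclusion $M_n\cup\{x_0\}\hookrightarrow M_0$. Then $\lVert S\rVert\le 1$, and the duality $\langle Rf,\mu\rangle=\langle f,S\mu\rangle$ holds on the dense span, so $S^\star=R$. Since $S^\star$ is an isomorphism onto with $\lVert (S^\star)^{-1}\rVert\le\lambda^{-1}$, we get $\lVert S\mu\rVert\ge\lambda\lVert\mu\rVert$. Also $S$ has dense range, as it hits every $\delta_{M_0}(x)$. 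Together these show $S$ is a $\lambda^{-1}$-isomorphism.

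The main step is the gluing estimate; everything else is standard duality bookkeeping. The only care needed is in passing from the dual statement to the predual, and the explicit formula for $S$ sidesteps this.
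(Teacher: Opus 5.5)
Your proof is correct. The paper does not prove this lemma itself; it only cites the literature. Your argument is the standard one and gives exactly the constant $\lambda^{-1}$: you use the restriction map $R$, glue with the estimate $|f_n(x)-f_m(y)|\le d(x,x_0)+d(y,x_0)\le\lambda^{-1}d(x,y)$, and identify $R=S^\star$, where $S$ is the sum of the linearised inclusions and is therefore bounded below by $\lambda$ with dense range.

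If you want a shorter route, the duality step can be skipped altogether. Define $\Phi:M_0\to\bigl(\bigoplus_{n}\FF(M_n\cup\{x_0\})\bigr)_{\ell_1}$ by sending $x\in M_n$ to $\delta(x)$ in the $n$-th coordinate and $x_0$ to $0$. The same estimate shows that $\Phi$ is well defined and $\lambda^{-1}$-Lipschitz. Its linearisation $\hat\Phi$ therefore has norm at most $\lambda^{-1}$, and it agrees with $S^{-1}$ on the $\delta$'s, hence everywhere. This predual-only version also carries over to Lipschitz-free $p$-spaces with $p<1$, where duality is unavailable.
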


 \noindent The next lemma may be found as the $p=1$ case of \cite{Lipschitzsumgeometric}*{Lemma 2.8 $(ii)$, $(iii)$}. 
\begin{lemma}\label{remove a point}
     There exists a universal constant $C\geq 1$ such that the following hold.
     \begin{enumerate}[label=(\roman*)]
     \item For every metric space $M$ with $\vert M \vert\geq 2$ and every $x_0\in M$, $\FF(M)\overset{C}{\sim} \FF(M\setminus\{x_0\})\oplus_1 \mathbb{R}$.
         \item For every infinite metric space $M$ and every $x_0\in M$, we have that $\FF(M)\overset{C}{\sim}\FF(M\setminus\{x_0\})$. \\
     \end{enumerate}
\end{lemma}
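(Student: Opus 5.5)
The plan is to prove part $(i)$ by an explicit isomorphism at the level of Lipschitz spaces that is $w^\star$-to-$w^\star$ continuous, and then pass to preduals. Part $(ii)$ will follow from $(i)$ together with the fact that infinite free spaces absorb a one-dimensional summand.

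For $(i)$, first assume $x_0$ is isolated. Put $\delta=\inf\{d(x_0,z):z\neq x_0\}>0$, and choose $y\in M\setminus\{x_0\}$ with $d(x_0,y)\le 2\delta$. Use $y$ as the basepoint of both $M$ and $M\setminus\{x_0\}$; the basepoint choice changes the free spaces only isometrically. Define
\[T:\Lip_y(M)\to \Lip_y(M\setminus\{x_0\})\oplus_\infty\mathbb{R},\qquad Tf=\Bigl(f\vert_{M\setminus\{x_0\}},\ \tfrac{f(x_0)}{d(x_0,y)}\Bigr).\]
Clearly $\lVert T\rVert\le 2$. Its inverse is $S(g,s)=f$, where $f=g$ on $M\setminus\{x_0\}$ and $f(x_0)=s\,d(x_0,y)$. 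The one estimate to check is that, for $z\neq x_0$,
\[|f(x_0)-f(z)|\le |s|\,d(x_0,y)+\lVert g\rVert_{\Lip}\,d(y,z)\le \bigl(2|s|+3\lVert g\rVert_{\Lip}\bigr)\,d(x_0,z).\]
This uses $d(x_0,y)\le 2\delta\le 2d(x_0,z)$ and $d(y,z)\le d(y,x_0)+d(x_0,z)\le 3d(x_0,z)$. Hence $\lVert S\rVert\le 5$. Both $T$ and $S$ are built from point evaluations, so they are $w^\star$-to-$w^\star$ continuous on bounded sets. Their preadjoints then give $\FF(M)\overset{10}{\sim}\FF(M\setminus\{x_0\})\oplus_1\mathbb{R}$.

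If $x_0$ is not isolated, then $M$ is infinite. Moreover $\FF(M\setminus\{x_0\})\cong\FF(M)$ isometrically, because a free space depends only on the completion and $x_0$ lies in the closure of $M\setminus\{x_0\}$. So in this case $(i)$ reduces to the absorption statement $\FF(N)\oplus_1\mathbb{R}\overset{C}{\sim}\FF(N)$ for infinite $N$. This is the same statement needed for $(ii)$, since for infinite $M$ we have $\FF(M\setminus\{x_0\})\oplus_1\mathbb{R}\sim\FF(M\setminus\{x_0\})$, with $M\setminus\{x_0\}$ infinite.

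For the absorption step, the plan is to exhibit a uniformly complemented copy of $\ell_1$ inside $\FF(N)$, say $\FF(N)\overset{C}{\sim}Z\oplus_1\ell_1$. Then
\[\FF(N)\oplus_1\mathbb{R}\sim Z\oplus_1(\ell_1\oplus_1\mathbb{R})\cong Z\oplus_1\ell_1\sim\FF(N),\]
using the shift isometry $\ell_1\oplus_1\mathbb{R}\cong\ell_1$. To find the $\ell_1$, I will use that any infinite metric space contains a sequence $(x_n)$ that is either bounded and Cauchy-free, or has a subsequence well separated from a suitable point. After passing to a subsequence, and using ideas as in Lemma \ref{well separated}, the normalized molecules $\frac{\delta(x_{2n})-\delta(x_{2n+1})}{d(x_{2n},x_{2n+1})}$ should span an isomorphic copy of $\ell_1$. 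This copy should be complemented by a projection built from Lipschitz bump functions: each bump equals $\pm$ half the gap near $x_{2n}$ and $x_{2n+1}$, with the bumps chosen to have disjoint supports.

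The main obstacle I expect is making this $\ell_1$-complementation quantitatively uniform, since the constant $C$ must be universal. This is essentially the Cúth–Doucha–Wojtaszczyk theorem, and in practice I would invoke it, or the cited \cite{Lipschitzsumgeometric}, rather than reprove it. The explicit computation for isolated points above is routine.
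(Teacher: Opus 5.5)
The paper gives no argument for this lemma. It imports it as the $p=1$ case of Lemma~2.8(ii),(iii) of the cited work on Lipschitz free $p$-spaces.

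Your argument is correct and says more than the paper does:
\begin{itemize}
\item The isolated-point computation is right; in fact $\lVert T\rVert\le 1$, so the constant there is $5$. Since every point of a finite space is isolated, it already proves (i) for all finite $M$.
\item For non-isolated $x_0$, the isometry $\FF(M\setminus\{x_0\})\cong\FF(M)$ correctly reduces the rest of (i), and all of (ii), to the absorption statement $\FF(N)\oplus_1\mathbb{R}\overset{C}{\sim}\FF(N)$ for infinite $N$.
\item The shift $\ell_1\oplus_1\mathbb{R}\cong\ell_1$ then reduces absorption to a copy of $\ell_1$ that is $K$-isomorphic and $K$-complemented in $\FF(N)$, with $K$ absolute.
\end{itemize}
What your route buys is that it shows exactly where the universality of $C$ comes from. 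Some such input is unavoidable, since $X\oplus\mathbb{R}\sim X$ fails for some Banach spaces. For that input, however, you rely on the literature just as the paper does. Note also that Proposition~\ref{l1}, as quoted in the paper, is only qualitative; you need the version with absolute constants.

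If you want to prove that input yourself, your sketch needs repair. Bumps of height $d(x_{2n},x_{2n+1})/2$ (half the gap) are not uniformly Lipschitz in general. For example, if $d(x_n,x_0)$ grows faster than geometrically, then $d(x_{2n},x_{2n+1})\gg d(x_{2n},x_0)$. Any function vanishing at $x_0$ with that height at $x_{2n}$ then has a huge Lipschitz constant.

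The fix is to scale the bumps by the distance to the distinguished point:
\begin{itemize}
\item First extract a sequence $(x_j)$ that is well-separated with respect to some point $x_0$ of the completion, with an absolute constant, say $\lambda=\frac14$. In the Cauchy and unbounded cases, geometric extraction does this. In the bounded uniformly separated case, first use Ramsey's theorem so that all mutual distances lie in one dyadic range $[2^i,2^{i+1})$; without this step the constants depend on diameter divided by separation.
\item Set $r_j=\frac{\lambda}{2}d(x_j,x_0)$ and $\phi_j=\max\{0,r_j-d(\cdot,x_j)\}$. The open balls $B(x_j,r_j)$ are pairwise disjoint and miss $x_0$, so $\lVert\sum_j a_j\phi_j\rVert_{\Lip}\le 2\sup_j|a_j|$.
\item Put $f_n=\phi_{2n}-\phi_{2n+1}$. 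It vanishes at every other $x_j$, and for your molecules $m_n$ it satisfies $\frac{\lambda}{2}\le\langle f_n,m_n\rangle\le\frac12$.
\item Hence $\mu\mapsto\sum_n\langle f_n,\mu\rangle\, m_n/\langle f_n,m_n\rangle$ is a projection of norm at most $4/\lambda$ onto $\overline{\mathrm{span}}\{m_n\}$. The same functions show that this span is $4/\lambda$-isomorphic to $\ell_1$.
\end{itemize}
This gives the universal constant you need.
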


\noindent The following result states that every infinite-dimensional Lipschitz-free space contains a complemented copy of $\ell_1$.

\begin{proposition}[\cite{ell1comp}*{Theorem 1 (i)}]\label{l1}
    For any infinite metric space $M$, $\ell_1\comp \FF(M)$.
\end{proposition}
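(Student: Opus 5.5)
The plan is to find pairs $(x_n,y_n)$ in $M$ with $x_n\neq y_n$, together with functions $f_n\in\Lip(M)$, such that the normalised molecules $m_n=\frac{\delta(x_n)-\delta(y_n)}{d(x_n,y_n)}$ span a complemented copy of $\ell_1$. We may assume $f_n(0_M)=0$, since this can be arranged by choosing the supports to avoid $0_M$. The requirements on $f_n$ are:
\begin{itemize}
\item $f_n(x_n)-f_n(y_n)=d(x_n,y_n)$;
\item $f_n(x_m)=f_n(y_m)=0$ for $m\neq n$;
\item there is a $K$ such that for every choice of signs $(\epsilon_n)$, the function $\sum_n\epsilon_nf_n$ is $K$-Lipschitz.
\end{itemize}
Given such data, define $P(\mu)=\sum_n\langle f_n,\mu\rangle m_n$ on $\FF(M)$. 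Then
\[\lVert P\mu\rVert\leq\sum_n\lvert\langle f_n,\mu\rangle\rvert=\sup_{\epsilon}\Big\langle\sum_n\epsilon_nf_n,\mu\Big\rangle\leq K\lVert\mu\rVert.\]
Also $\langle f_n,m_k\rangle=\delta_{nk}$, so $P$ is a projection onto $\overline{\mathrm{span}}\{m_n\}$. Finally, testing $\sum_n a_nm_n$ against $\sum_n\mathrm{sgn}(a_n)f_n$ gives
\[\Big\lVert\sum_n a_nm_n\Big\rVert\geq K^{-1}\sum_n\lvert a_n\rvert,\]
and the reverse inequality is trivial. So $(m_n)$ is $K$-equivalent to the unit vector basis of $\ell_1$, and its span is $K$-complemented.

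For the functions, the plan is to use disjoint bumps $f_n(x)=(r_n-d(x,x_n))^+$ with $r_n=d(x_n,y_n)$. The key step is to choose the pairs so that the open balls $B(x_n,r_n)$ are pairwise disjoint. Then each $f_n$ is $1$-Lipschitz and vanishes off its ball, and $f_n(x_n)-f_n(y_n)=r_n$. For $x$ in the $n$-th ball and $y$ outside it, $f_n(x)\leq d(x,y)$. If $x,y$ lie in different balls, then $\lvert\epsilon_nf_n(x)-\epsilon_mf_m(y)\rvert\leq f_n(x)+f_m(y)\leq 2d(x,y)$. Hence every signed sum is $2$-Lipschitz and $K=2$ works. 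We also keep $0_M$ out of all the balls, discarding at most one pair if necessary, so that $f_n(0_M)=0$. The points $x_m,y_m$ with $m\neq n$ lie outside $B(x_n,r_n)$ by disjointness, so $f_n$ vanishes there.

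Constructing the pairs splits into two cases.

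\emph{Case 1: $M$ has infinitely many isolated points $z_1,z_2,\dots$.} Put $s_n=\inf_{y\neq z_n}d(z_n,y)>0$, choose $y_n$ with $d(z_n,y_n)<2s_n$, and use bumps of radius $s_n/2$ instead of $r_n$. The height $s_n/2$ is comparable to $d(z_n,y_n)$, which only affects the constants. Disjointness is automatic: $d(z_n,z_m)\geq\max(s_n,s_m)$, so the balls $B(z_n,s_n/2)$ and $B(z_m,s_m/2)$ cannot meet.

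\emph{Case 2: only finitely many isolated points.} Since $M$ is infinite, it has a non-isolated point $z$ and a sequence of distinct points $z_n\to z$. Discarding finitely many terms, each $z_n$ is itself non-isolated. Pass to a subsequence with $\rho_n=d(z_n,z)$ satisfying $\rho_{n+1}<\rho_n/4$. Since $z_n$ is non-isolated, pick $y_n\neq z_n$ with $r_n=d(z_n,y_n)<\rho_n/4$. Then $B(z_n,r_n)$ lies in the annulus $\{3\rho_n/4<d(\cdot,z)<5\rho_n/4\}$, and these annuli are pairwise disjoint by the ratio condition.

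The main obstacle is precisely this choice of pairs with pairwise disjoint balls. An arbitrary sequence of pairs could have molecule norms and bump heights that are not comparable, which would make $P$ unbounded. The dichotomy above, isolated points versus an accumulation point of non-isolated points, is what handles this.
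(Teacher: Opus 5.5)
The paper does not prove this statement; it quotes it from the literature. So your argument has to stand on its own. The overall mechanism is sound: normalised molecules $m_n$, functions $f_n$ whose signed sums are uniformly Lipschitz, and the projection $P$. Case 2 also works. The gap is in the second requirement, $f_n(x_m)=f_n(y_m)=0$ for $m\neq n$, which you justify ``by disjointness''.

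\textbf{Why disjointness does not cover $y_m$.} Since $d(x_m,y_m)=r_m$, the point $y_m$ does \emph{not} lie in the open ball $B(x_m,r_m)$. Disjointness of the balls therefore says nothing about where $y_m$ is.

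\textbf{Case 2 is fine.} There $3\rho_m/4<d(y_m,z)<5\rho_m/4$, so $y_m$ lies in the $m$-th annulus, which is disjoint from the support of every other bump. You should state this explicitly.

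\textbf{Case 1 genuinely fails.} The bump of radius $s_n/2$ at $z_n$ is just $\frac{s_n}{2}\mathbf{1}_{\{z_n\}}$, and your choice of $y_m$ does not prevent $y_m=z_n$. For example, take $M=\mathbb{N}$, $z_n=n+1$ and $y_n=n+2=z_{n+1}$; this is allowed, since $d(z_n,y_n)=1<2s_n$. Then $f_{n+1}(y_n)=\frac12$, and three things break:
\begin{itemize}
\item the biorthogonality $\langle f_n,m_k\rangle=0$ for $k\neq n$ fails;
\item $P$ is not a projection onto $\overline{\mathrm{span}}\{m_n\}$;
\item the test function $\sum_n f_n$ pairs to $0$ with every $m_k$, so your lower $\ell_1$-estimate is lost.
\end{itemize}
The conclusion is of course true for $\mathbb{N}$; it is the argument that breaks.

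\textbf{Repair by extraction in Case 1.} Since the $z_n$ are distinct, each $y_m$ equals at most one $z_n$; call that index $\phi(m)$ when it exists.
\begin{itemize}
\item If some fibre $\phi^{-1}(n)$ is infinite, keep only the indices in it. All the retained $y_m$ equal $z_n$, and $n$ itself is not retained because $y_n\neq z_n$. So no retained $y_m$ equals a retained $z_{m'}$.
\item Otherwise the graph with edges $\{m,\phi(m)\}$ is locally finite. A greedy choice then gives an infinite set of pairwise non-adjacent indices.
\end{itemize}
On the extracted subsequence, $f_n$ vanishes at all $z_m,y_m$ with $m\neq n$. The signed sums remain $1$-Lipschitz there, because $d(z_n,z_m)\geq\max(s_n,s_m)$. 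The rest of your proof then goes through unchanged.
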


 \noindent The following lemma is an application of Lemmas \ref{well separated} and \ref{remove a point}.

 \begin{lemma}[\cite{Aliaga}*{Lemma 3.3}]\label{well separated disjoint}
    \noindent Let $M$ be a metric space and $(M_n)_{n=1}^\infty$ be a sequence of non-empty, pairwise disjoint, well-separated subsets of $M$. Then $\mathcal{F}\left (\bigcup\limits_{n=1}^\infty M_n\right ) \sim \left (\bigoplus\limits_{n=1}^\infty \mathcal{F}(M_n)\right )_{\ell_1}\oplus_1 \ell_1$. \\
\end{lemma}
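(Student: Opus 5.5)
Since the statement is cited from \cite{Aliaga}*{Lemma 3.3} as an application of Lemmas \ref{well separated} and \ref{remove a point}, the plan is to combine these two. First pick a point $x_0\in M$ with respect to which $(M_n)_{n\geq 1}$ is well-separated with some constant $\lambda$. If $x_0$ lies in $\bigcup_n M_n$, say $x_0\in M_{n_0}$, then the separation inequality for $x=x_0\in M_{n_0}$ and $y\in M_j$, $j\neq n_0$, reads $d(x_0,y)\geq\lambda d(x_0,y)$, which is harmless. It is cleaner, however, to work in $M' = \bigcup_n M_n\cup\{x_0\}$ throughout. By Lemma \ref{well separated},
\[\FF\Bigl(\bigcup_n M_n\cup\{x_0\}\Bigr)\sim\Bigl(\bigoplus_n \FF(M_n\cup\{x_0\})\Bigr)_{\ell_1}.\]

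The next step is to remove $x_0$ from each summand, using part (i) of Lemma \ref{remove a point} with its universal constant $C$. If $x_0\notin M_n$ and $M_n\neq\emptyset$, then $|M_n\cup\{x_0\}|\geq 2$, so $\FF(M_n\cup\{x_0\})\overset{C}{\sim}\FF(M_n)\oplus_1\mathbb{R}$. At most one index has $x_0\in M_n$, since the sets are disjoint; for that index the summand is just $\FF(M_{n_0})$, and it equals $\FF(M_{n_0})\oplus_1\mathbb{R}$ up to isomorphism either way (by part (i) applied inside $M_{n_0}$ if it has at least two points, or trivially if it is a singleton and we allow an extra $\mathbb{R}$). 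Since $C$ is uniform, the $\ell_1$-sum remark in the notation paragraph gives
\[\Bigl(\bigoplus_n\FF(M_n\cup\{x_0\})\Bigr)_{\ell_1}\sim\Bigl(\bigoplus_n\bigl(\FF(M_n)\oplus_1\mathbb{R}\bigr)\Bigr)_{\ell_1}\cong\Bigl(\bigoplus_n\FF(M_n)\Bigr)_{\ell_1}\oplus_1\ell_1,\]
where the last step just regroups the $\mathbb{R}$ summands into $\ell_1$. This handles any finite modification at the exceptional index.

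It then remains to compare $\FF(\bigcup_n M_n\cup\{x_0\})$ with $\FF(\bigcup_n M_n)$. If $x_0\in\bigcup_n M_n$ there is nothing to do. Otherwise $\bigcup_n M_n$ is infinite, because there are infinitely many nonempty disjoint sets, so Lemma \ref{remove a point}(ii) applied to the infinite space $\bigcup_n M_n\cup\{x_0\}$ gives $\FF(\bigcup_n M_n\cup\{x_0\})\sim\FF(\bigcup_n M_n)$. Chaining the isomorphisms finishes the proof. The only mildly delicate point is the bookkeeping: the constant in the $\ell_1$-sum step must be uniform in $n$, which is exactly why the universal constant in Lemma \ref{remove a point} matters, and the possibly exceptional index $n_0$ must be handled separately. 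I expect no real obstacle beyond this.
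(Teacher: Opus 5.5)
Your proof is correct and takes the route the paper indicates. The paper gives no argument of its own: it cites Aliaga's Lemma 3.3 and notes that the result is an application of Lemmas \ref{well separated} and \ref{remove a point}, which is exactly your chain (split over $\bigcup_n M_n\cup\{x_0\}$, apply Lemma \ref{remove a point}(i) uniformly to each summand, then delete $x_0$ with Lemma \ref{remove a point}(ii)). One small correction: at the exceptional index $n_0$ with $x_0\in M_{n_0}$, Lemma \ref{remove a point}(i) does not give $\FF(M_{n_0})\sim\FF(M_{n_0})\oplus_1\mathbb{R}$, which is false by a dimension count when $M_{n_0}$ is finite. You don't need that claim: leave that summand as $\FF(M_{n_0})$, and the $\mathbb{R}$ summands from the infinitely many indices $n\neq n_0$ already form the $\ell_1$ factor.
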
 
\noindent We remark that if $\vert M_n \vert\geq 2$ for infinitely many $n$ or if some $M_n$ is infinite, then $\ell_1\comp \left (\bigoplus\limits_{n=1}^\infty\FF(M_n) \right )_{\ell_1}$. So the $\ell_1$ factor in Lemma \ref{well separated disjoint} is not required unless all $M_n$ are finite and all but finitely many $M_n$ are singletons. \\

\noindent Before the next result, we briefly recall the definition of a doubling metric space.
\begin{definition}
     A metric space $M$ is \emph{doubling} if $\exists k\in \mathbb{N}$ such that $\forall r>0$, $\forall x\in M$, $\exists x_1, \cdots, x_k\in M$ with $\overline{B}(x, r)\subset \bigcup\limits_{i=1}^k \overline{B}(x_i, \frac{r}{2})$ (i.e. every closed ball can be covered by $k$ closed balls of half its radius). The least such $k$ is denoted by $D(M)$ and is called the \emph{doubling constant} of $M$. 
\end{definition}
\noindent The construction underlying the following result is the existence of gentle partitions of unity in doubling metric spaces, which was proved by Lee and Naor in \cite{LeeNaor}. It follows immediately from \cite{isolipschitzspace}*{Proposition 1.8}, using the fact that any subset of a doubling metric space is doubling. We note that the result holds more generally: \cite{isolipschitzspace} only assumes that $N$ is doubling.
\begin{proposition}[\cite{isolipschitzspace}*{Proposition 1.8}]\label{Complementation} Let $N$ and $M$ be non-empty subsets of a doubling metric space with $N\subset M$. Then $\mathcal{F}(N) \overset{c}{\hookrightarrow} \mathcal{F}(M)$ and $\lip(N) \overset{c}\hookrightarrow \lip(M)$. 
\end{proposition}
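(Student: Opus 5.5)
The plan is to build a bounded linear extension operator from Lipschitz functions on $N$ to Lipschitz functions on $M$ that is given pointwise by finite combinations of point evaluations. One statement follows from the operator itself, the other from its predual. Fix a basepoint $0\in N$ and use it for both $N$ and $M$; this is allowed since the isometry classes of $\FF$ and $\lip$ do not depend on the basepoint. We may also replace $N$ by its closure $\overline{N}$ in the ambient complete doubling space. Every Lipschitz function on $N$ extends uniquely to $\overline{N}$ with the same constant, and $\delta$ embeds $\overline N$ isometrically into $\FF(N)$, so $\FF(\overline N)\cong\FF(N)$ and $\lip(\overline N)\cong\lip(N)$. Subsets of doubling spaces are doubling, so $M$ is doubling with $D(M)\le D^2$, where $D$ is the doubling constant of the ambient space.

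\textbf{Step 1 (gentle partition of unity).} Following Lee--Naor, for each $k\in\mathbb Z$ take a $2^k$-net $\{y^k_i\}_i\subset N$. For $x\in M\setminus N$, work at the scale $2^k\approx d(x,N)$ and build Lipschitz functions $\varphi_i$ on $M\setminus N$ with the following properties:
\begin{itemize}
\item $\varphi_i\ge 0$ and $\sum_i\varphi_i\equiv1$;
\item only boundedly many $\varphi_i$ (the bound depending on $D$) are nonzero at any point;
\item if $\varphi_i(x)\neq0$ then $d(x,y_i)\le c\,d(x,N)$;
\item the gentleness estimate
\[
\sum_i|\varphi_i(x)-\varphi_i(x')|\,d(y_i,z)\le K\,d(x,x') \quad\text{for a suitable nearby } z\in N,
\]
with $K=O(\log D)$.
\end{itemize}
Define
\[
Ef(x)=\begin{cases} f(x) & x\in N,\\ \sum_i\varphi_i(x)f(y_i) & x\in M\setminus N.\end{cases}
\]
The standard Lee--Naor computation then shows $\lVert Ef\rVert_{\Lip}\le K'\lVert f\rVert_{\Lip}$ with $K'$ depending only on $D$. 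The case where both points lie off $N$ uses gentleness after subtracting $f(z)$. The mixed case uses $d(x,y_i)\le c\,d(x,N)$. Clearly $Ef(0)=0$, and $E$ is linear with $R\circ E=\mathrm{id}$, where $R$ is restriction to $N$.

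\textbf{Step 2 (the $\lip$ statement).} The map $P=E\circ R$ is a bounded projection on $\lip(M)$. Its range $E(\lip(N))$ is isomorphic to $\lip(N)$, because $\lVert f\rVert\le\lVert Ef\rVert\le K'\lVert f\rVert$. This gives $\lip(N)\comp\lip(M)$. Note that in this paper $\lip$ denotes the full space of Lipschitz functions vanishing at the basepoint, so nothing more is needed. If one wanted the little-Lipschitz variant instead, it can be read off from the same computation. The increments $|Ef(x)-Ef(x')|$ are controlled by the oscillation of $f$ on a ball in $N$ whose radius is comparable to $d(x,x')+d(x,N)$, and whose centre lies near $x$. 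So local flatness of $f$ transfers to $Ef$.

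\textbf{Step 3 (the $\FF$ statement).} Since $Ef(x)$ is a finite convex combination of point evaluations of $f$, the operator $E$ is $w^\star$-to-$w^\star$ continuous on bounded sets. Indeed, $w^\star$ convergence on bounded sets is pointwise convergence, and pointwise convergence of $f$ gives pointwise convergence of $Ef$. Hence $E=T^\star$ for the bounded operator $T:\FF(M)\to\FF(N)$ given by
\[
T\delta(x)=\begin{cases}\delta(x) & x\in N,\\ \sum_i\varphi_i(x)\delta(y_i) & x\in M\setminus N.\end{cases}
\]
Alternatively, define $T$ by this formula directly and bound it using $\lVert T\rVert=\lVert E\rVert$. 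Let $\hat\iota:\FF(N)\to\FF(M)$ be the linearisation of the inclusion. Then $T\circ\hat\iota=\mathrm{id}_{\FF(N)}$, since both maps fix every $\delta(y)$ with $y\in N$. Therefore $\hat\iota\circ T$ is a bounded projection of $\FF(M)$ onto $\hat\iota(\FF(N))$. Moreover $\hat\iota$ is an isometric embedding, because its adjoint is restriction, which is a quotient map by McShane extension. This proves $\FF(N)\comp\FF(M)$.

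\textbf{Main obstacle.} The main difficulty is Step 1, the construction of the gentle partition of unity with $K$ depending only on the doubling constant. The key point is choosing the scales and the random (or averaged) padded decompositions so that the gentleness sum is controlled uniformly. I would follow the Lee--Naor construction directly, which is exactly the content of the cited result. The passage to preduals in Step 3 is then routine.
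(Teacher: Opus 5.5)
Your proposal is correct and follows essentially the same route as the source the paper cites for this proposition. There, a Lee--Naor gentle partition of unity gives a bounded linear, $w^\star$-to-$w^\star$ continuous extension operator $E$; then $E\circ R$ projects $\lip(M)$ onto a copy of $\lip(N)$, and $\hat{\iota}\circ T$ with $T^\star=E$ projects $\FF(M)$ onto $\FF(N)$. Your finitely supported sums make the $w^\star$-continuity immediate, where the paper's sources use an integral representation. One small slip: replace $N$ by its closure in $M$, or replace both $N$ and $M$ by their closures in a completion, since the closure of $N$ in the ambient space need not lie inside $M$.
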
 

\noindent The next decomposition result is due to Kalton, but in a form slightly different from the original, which may be found as Proposition 4.3 of \cite{Kalton}. In the original, the direct sum runs over $\mathbb{Z}$ and there is no assertion of complementation. However, this version follows from Kalton's construction, see \cite{normedspaces}*{Lemma 1.2}.

\begin{lemma}\label{Kalton decomposition} Let $M$ be a metric space and $0 \in M$ be a basepoint. Then
\[
\mathcal{F}(M) \overset{c}\hookrightarrow \left( \bigoplus_{k=1}^{\infty} \mathcal{F}(\overline{B}(0, 2^k)) \right)_{\ell_1}.
\]
\end{lemma}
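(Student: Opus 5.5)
The plan is to write $\mathrm{id}_{\FF(M)}$ as $S\circ T$, where $T:\FF(M)\to\left(\bigoplus_{k=1}^\infty\FF(\overline{B}(0,2^k))\right)_{\ell_1}$ is bounded and $S$ sums the formal inclusions. Then $T$ is an isomorphism onto its range, and $TS$ is a bounded projection onto that range. The decomposition comes from a Lipschitz partition of unity in the radial variable $t=d(x,0)$, following Kalton.

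\textbf{The partition of unity.} Let $h_1(t)=1$ on $[0,1]$, decreasing linearly to $0$ at $t=2$ and vanishing afterwards. For $k\geq 2$, let $h_k$ be the piecewise linear hat that is $0$ at $2^{k-2}$, equals $1$ at $2^{k-1}$, is $0$ at $2^k$, and vanishes outside $[2^{k-2},2^k]$. Set $\varphi_k(x)=h_k(d(x,0))$. These functions have the following properties:
\begin{itemize}
\item $\sum_k\varphi_k\equiv 1$ on $M$;
\item at each point at most two of the $\varphi_k$ are nonzero;
\item $\operatorname{supp}\varphi_k\subset\overline{B}(0,2^k)$;
\item $\mathrm{Lip}(\varphi_k)\leq 2^{2-k}$;
\item $\varphi_k(0)=0$ for $k\geq 2$.
\end{itemize}

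\textbf{The maps.} Define $T$ on finitely supported elements by $T(\delta_M(x))=(\varphi_k(x)\,\delta_{B_k}(x))_{k\geq 1}$, where $B_k=\overline{B}(0,2^k)$. The $k$-th entry is taken to be $0$ when $x\notin B_k$, which is consistent because $\varphi_k(x)=0$ there. Let $S((\mu_k)_k)=\sum_k \iota_k\mu_k$, where $\iota_k:\FF(B_k)\to\FF(M)$ is the norm-one linearisation of the inclusion. Then $\lVert S\rVert\leq 1$. Moreover $ST(\delta_M(x))=\sum_k\varphi_k(x)\delta_M(x)=\delta_M(x)$, so $ST=\mathrm{id}$ once $T$ is shown to be bounded and extended by continuity.

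\textbf{Boundedness of $T$.} By duality with the $\ell_\infty$-sum of the $\lip(B_k)$, it suffices to prove the following claim. For any $(f_k)_k$ with $f_k\in\lip(B_k)$ and $\lVert f_k\rVert_{\Lip}\leq 1$, the function
\[
g(x)=\sum_k\varphi_k(x)f_k(x)
\]
(terms with $x\notin B_k$ read as $0$) is Lipschitz on $M$ with a universal constant $C$. Indeed, then
\[
\Bigl\langle (f_k),\,T\bigl(\textstyle\sum a_x\delta_M(x)\bigr)\Bigr\rangle=\textstyle\sum a_x g(x)\leq C\,\bigl\lVert\textstyle\sum a_x\delta_M(x)\bigr\rVert .
\]
Note first that $|g(x)|\leq d(x,0)$, since $|f_k(x)|\leq d(x,0)$ and $\sum_k\varphi_k=1$.

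Take $x,y$ with $d(y,0)\geq d(x,0)$. There are two cases.
\begin{itemize}
\item If $d(y,0)\geq 2d(x,0)$, then $d(x,y)\geq d(y,0)/2$, and so
\[
|g(x)-g(y)|\leq d(x,0)+d(y,0)\leq 4\,d(x,y).
\]
\item If $d(y,0)<2d(x,0)$, only boundedly many indices $k$ (with $2^k$ comparable to $d(x,0)$) have $\varphi_k(x)\neq0$ or $\varphi_k(y)\neq0$. For each such $k$ write
\[
\varphi_k(x)f_k(x)-\varphi_k(y)f_k(y)=\varphi_k(x)\bigl(f_k(x)-f_k(y)\bigr)+\bigl(\varphi_k(x)-\varphi_k(y)\bigr)f_k(y).
\]
This is legitimate whenever both $x,y\in B_k$. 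The first term is at most $d(x,y)$. The second is at most $2^{2-k}d(x,y)\cdot 2^k=4d(x,y)$.
\end{itemize}

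\textbf{The main obstacle.} The delicate point is the bookkeeping in the second case when one of $x,y$ lies in $B_k$ and the other does not, so $f_k$ is undefined at one point. I would handle this by noting that if $y\notin B_k$ but $\varphi_k(x)\neq0$, then $\varphi_k(y)=0$, and $\varphi_k(x)=|\varphi_k(x)-\varphi_k(y)|\leq 2^{2-k}d(x,y)$. Combined with $|f_k(x)|\leq d(x,0)\leq 2^k$, this gives a contribution of at most $4d(x,y)$. The index shifts need care so that the supports stay inside $B_k$ while the ratio between radius and Lipschitz constant remains bounded. Everything else is a routine check that $T$ is well defined and linear on the dense span of $\{\delta_M(x)\}$.
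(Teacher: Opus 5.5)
Your proposal is correct and is essentially the construction the paper relies on. The paper gives no proof of its own; it cites Kalton's radial partition-of-unity argument \cite{Kalton}, in which $\mu\mapsto(\varphi_k\mu)_k$ is bounded via the dual Lipschitz estimate on $g=\sum_k\varphi_k f_k$, and summing the inclusions gives a left inverse, hence complementation. Your choice $h_1\equiv 1$ on $[0,1]$ is what lets the sum start at $k=1$ rather than run over $\mathbb{Z}$, and your handling of indices where only one of $x,y$ lies in $B_k$ is correct.
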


\noindent The next result is a special instance of \cite{Lipschitzsumgeometric}*{Corollary 5.10} in the case $p=1$ , $M=\mathbb{R}^d$ and $N=\mathbb{Z}^d$, with the metric on $\mathbb{R}^d$ being that derived from the $\lVert \cdot \rVert_\infty$ norm. Note that $\mathbb{Z}^d$ may also be replaced more generally by a net $N$ in any Banach space $X$ (see \cite{normedspaces}*{Theorem 3.6}).

\begin{proposition}\label{Z^d l_1 sum} For $d \in \mathbb{N}$, $\mathcal{F}\left(\mathbb{Z}^d, \lVert \cdot \rVert_\infty\right) \sim \left( \bigoplus\limits_{k=1}^{\infty} \mathcal{F}\left(\mathbb{Z}^d, \lVert \cdot \rVert_\infty\right) \right)_{\ell_1}$ .
\end{proposition}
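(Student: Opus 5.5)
The plan is to use the Pe\l{}czy\'{n}ski decomposition method, with an auxiliary space built from finite cubes. For $k\in\mathbb{N}$ let $Q_k=\{-2^k,\dots,2^k\}^d$, which is exactly $\overline{B}(0,2^k)$ in $(\mathbb{Z}^d,\lVert\cdot\rVert_\infty)$. Define
\[
W=\Bigl(\bigoplus_{k=1}^\infty\bigoplus_{n=1}^\infty \FF(Q_k)\Bigr)_{\ell_1}.
\]
Since each $\FF(Q_k)$ occurs infinitely often, reindexing gives $W\cong\bigl(\bigoplus_{n=1}^\infty W\bigr)_{\ell_1}$ isometrically. Write $X=\FF(\mathbb{Z}^d,\lVert\cdot\rVert_\infty)$. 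I will show $X\comp W$ and $W\comp X$. Pe\l{}czy\'{n}ski's decomposition (the $\ell_1$-sum version) then gives $X\sim W$. Consequently
\[
\Bigl(\bigoplus_{n=1}^\infty X\Bigr)_{\ell_1}\sim\Bigl(\bigoplus_{n=1}^\infty W\Bigr)_{\ell_1}\cong W\sim X,
\]
where the first isomorphism holds because a single isomorphism $X\to W$ acts coordinatewise with a uniform constant.

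\emph{Step 1: $X\comp W$.} Lemma \ref{Kalton decomposition} with basepoint $0$ gives $X\comp\bigl(\bigoplus_k \FF(Q_k)\bigr)_{\ell_1}$. This space is isometric to a $1$-complemented subspace of $W$: keep the copy $n=1$ of each $\FF(Q_k)$ and project away the other coordinates.

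\emph{Step 2: $W\comp X$.} Enumerate the pairs $(k,n)$ as a single sequence $(k_j)_{j\ge1}$ in which each $k$ appears infinitely often. Choose integer points $x_j\in\mathbb{Z}^d$ such that $r_j:=\lVert x_j\rVert_\infty$ grows fast, for instance $r_j\ge 10\,(r_{j-1}+2^{k_j}+2^{k_{j-1}})$, and set $C_j=x_j+Q_{k_j}$. These are pairwise disjoint and isometric to $Q_{k_j}$ by translation. For $x\in C_i$, $y\in C_j$ with $i<j$ we have $\lVert y\rVert\approx r_j$ and $\lVert x\rVert\le r_i+2^{k_i}\ll r_j$. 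Hence $\lVert x-y\rVert\ge \lambda(\lVert x\rVert+\lVert y\rVert)$ for a fixed $\lambda$ (e.g.\ $\lambda=1/4$), so the family $(C_j)$ is well-separated with respect to $0$. By Lemma \ref{well separated disjoint},
\[
\FF\Bigl(\bigcup_j C_j\Bigr)\sim W\oplus_1\ell_1\sim W,
\]
where the last isomorphism uses the remark after that lemma: $\ell_1\comp W$, and $W$ is isomorphic to its own $\ell_1$-sum, so it absorbs the extra $\ell_1$. Finally $\bigcup_j C_j\subset\mathbb{Z}^d$, and $\mathbb{Z}^d$ with the $\ell_\infty$ metric is doubling. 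So Proposition \ref{Complementation} gives $\FF(\bigcup_j C_j)\comp X$, and therefore $W\comp X$.

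The only genuinely delicate point is the well-separation estimate in Step 2, together with the uniformity of constants when passing to the infinite $\ell_1$-sums. The fast growth of the centres $x_j$ handles the first. For the second, translations are isometries, and the constants from Lemma \ref{well separated disjoint} and Proposition \ref{Complementation} are applied only once to the whole union, so no uniformity over infinitely many applications is needed. Once both complemented embeddings are in place, the Pe\l{}czy\'{n}ski argument above is routine.
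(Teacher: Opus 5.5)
Your proof is correct, but it takes a different route from the paper. The paper gives no argument for this statement. It quotes it as the case $p=1$, $M=\mathbb{R}^d$, $N=\mathbb{Z}^d$ of \cite{Lipschitzsumgeometric}*{Corollary 5.10}, and notes that it holds for nets in any Banach space by \cite{normedspaces}*{Theorem 3.6}.

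You instead derive it from tools the paper already imports for other purposes: Lemma \ref{Kalton decomposition}, Lemma \ref{well separated disjoint} and Proposition \ref{Complementation}. None of these depends on the proposition, so there is no circularity. The feature you exploit is the translation invariance of $\mathbb{Z}^d$. Every ball $Q_k$ has isometric copies arbitrarily far from $0$, so the translates $C_j$ realise $W$ (plus an $\ell_1$ summand) inside $\FF(\mathbb{Z}^d)$, and doubling supplies the complementation. This is the same mechanism the paper itself uses in Step 2 of Section 1.1.1.

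The details hold up:
\begin{itemize}
\item Your separation estimate works, and is in fact better than you claim. From $r_{i+1}\ge 10(r_i+2^{k_i})$ and the monotonicity of $(r_j)$, every $x\in C_i$ with $i<j$ satisfies $\lVert x\rVert_\infty\le r_j/10$, while every $y\in C_j$ satisfies $0.9r_j\le\lVert y\rVert_\infty\le 1.1r_j$. This gives $\lambda=2/3$.
\item The Pe\l{}czy\'nski step only needs $W\cong\bigl(\bigoplus_n W\bigr)_{\ell_1}$, which you have.
\item A small simplification: you never need $W\oplus_1\ell_1\sim W$. Since $W$ is $1$-complemented in $W\oplus_1\ell_1$, the conclusion $W\comp\FF\bigl(\bigcup_j C_j\bigr)$ is immediate.
\end{itemize}

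What your route buys is a short proof that is self-contained relative to the paper's toolkit. What the cited results buy is generality. They cover nets in arbitrary (possibly infinite-dimensional, non-doubling) Banach spaces, where distant balls of the net need not be isometric and Proposition \ref{Complementation} is unavailable, and they also cover $p$-free spaces. Your translation-plus-doubling argument does not directly reach those cases.
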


\noindent We recall some standard terminology in Definitions \ref{pointwisetop} and \ref{ext}.

 \begin{definition}\label{pointwisetop} For a metric space $M$, the \emph{pointwise topology} on $\lip(M)$ is the initial topology generated by the collection of evaluation maps $\{\delta_M(x): x\in M\}$. 
 \end{definition}

 \noindent We recall that for a metric space $M$, on each norm bounded subset of $\lip(M)$,  the $w^\star$-topology induced by the canonical identification $\FF(M)^\star \cong \lip(M)$ coincides with the pointwise topology.

 \begin{definition}\label{ext} Let $M$ be a metric space, $0\in M$ be a basepoint and $N$ be a subset of $M$ containing $0$.  
 \begin{itemize}
     \item $\Ext_0(N, M)$ is defined to be the collection of bounded linear maps $T:\lip(N)\to \lip(M)$ such that $\forall f\in \lip(N)$, $\left (Tf \right )\vert_N=f$. 
     \item $\Ext_0^{pt}(N, M)$ is defined to be the collection of all $T\in \Ext_0(N, M)$ that are $w^\star$-to-$w^\star$ continuous. Equivalently, $\Ext_0^{pt}(N, M)$ is the collection of all $T\in \Ext_0(N, M)$ such that $T\vert_{B_{\lip(N)}}$ is pointwise to pointwise continuous.
 \end{itemize}
 \noindent In some cases, when a basepoint $q\in N$ is taken and an ambient point $0$ exists in $M$ (for example if $M$ is a Banach space), we may write $\Ext_q(N, M)$ or $\Ext_q^{pt}(N, M)$ to avoid confusion.
 \end{definition}

 \noindent The equivalence in the second bullet point comes from the Banach-Dieudonn\'e theorem, which implies for Banach spaces $X, Y$ and $T\in \mathcal{B}(X^\star, Y^\star)$, that $T$ is $w^\star$-to-$w^\star$ continuous iff $T\vert_{B_{X^\star}}$ is $w^\star$-to-$w^\star$ continuous. Before the next lemma we recall the following notion of a quotient metric space.
 \begin{definition}
      Let $(M, d)$ be a metric space and $A$ be a non-empty closed subset of $M$. $\bigslant{M}{A}:=(M\setminus A)\cup\{A\}$ is a metric space in the quotient metric $d_{\bigslant{M}{A}}$ defined by
    $d_{\bigslant{M}{A}}(m, A)=d(m, A)$ for all $m\in M\setminus A$, and $d_{\bigslant{M}{A}}(m, n)=\min\{d(m, n), d(m, A)+d(n, A)\}$ for all $m, n\in M\setminus A$. 
 \end{definition}
 \noindent This can be thought of as the result of collapsing $A$ to a point. \\
 
 \noindent The existence of $w^\star$-to-$w^\star$ continuous extension operators implies the following decomposition result relating the free space of a metric space $M$ to those of $\bigslant{M}{A}$ and $A$ for a closed subset $A$ of $M$. It can informally be thought of as splitting a Lipschitz function on $M$ into a Lipschitz function on $A$ and a Lipschitz function on $M$ which vanishes on $A$. 

 \begin{lemma}[\cite{Kaufmann}*{Lemma 2.2}]\label{Quotient decomposition} \ Let $(M, d)$ be a metric space, $0\in M$ be a basepoint, and $A$ be a closed subset of $M$ containing $0$. Assume that $T\in \Ext_0^{pt}(A, M)$. Then $\FF(M)$ and $\FF\left (\bigslant{M}{A} \right )\oplus_1 \FF(A)$ are $(1+\lVert T \rVert)^2$-isomorphic. 
 \end{lemma}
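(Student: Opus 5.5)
The plan is to build an isomorphism at the level of the dual spaces $\lip(M)$ and $\lip\left(\bigslant{M}{A}\right)\oplus_\infty \lip(A)$, check that it and its inverse are $w^\star$-to-$w^\star$ continuous, and then pass to preduals. Since $\left(\FF\left(\bigslant{M}{A}\right)\oplus_1\FF(A)\right)^\star\cong \lip\left(\bigslant{M}{A}\right)\oplus_\infty\lip(A)$, a $w^\star$-continuous bounded bijection between the duals is the adjoint of a bounded bijection between the preduals with the same norm, and the same holds for its inverse. Throughout, $\bigslant{M}{A}$ carries the basepoint $A$.

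\textbf{Step 1: identify $\Lip_A(M)$ with $\lip\left(\bigslant{M}{A}\right)$ isometrically.} Given $g\in \Lip_A(M)$ with $\lVert g\rVert_{\Lip}=L$, define $\tilde g(m)=g(m)$ for $m\notin A$ and $\tilde g(A)=0$. We have $|g(m)|\le L\,d(m,a)$ for every $a\in A$, so $|g(m)|\le L\,d(m,A)$. Consequently $|g(m)-g(n)|\le L\min\{d(m,n),\,d(m,A)+d(n,A)\}$, which gives $\lVert\tilde g\rVert_{\Lip}\le L$. Conversely, lifting any $\tilde g\in\lip\left(\bigslant{M}{A}\right)$ to $M$ by setting it equal to $0$ on $A$ gives a function $g$ with $\lVert g\rVert_{\Lip}\le\lVert \tilde g\rVert_{\Lip}$, because $d_{\bigslant{M}{A}}\le d$. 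This identification is pointwise-to-pointwise continuous in both directions, since it is just a relabelling of the point evaluations.

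\textbf{Step 2: the splitting.} Define
\[\Phi:\lip(M)\to \Lip_A(M)\oplus_\infty \lip(A),\qquad \Phi f=\bigl(f-T(f\vert_A),\ f\vert_A\bigr).\]
The first coordinate vanishes on $A$ because $T(f\vert_A)\vert_A=f\vert_A$. We have $\lVert\Phi f\rVert\le(1+\lVert T\rVert)\lVert f\rVert$. The inverse is $\Psi(g,h)=g+Th$, and a direct check gives $\Psi\Phi=\mathrm{id}$ and $\Phi\Psi=\mathrm{id}$, using $(Th)\vert_A=h$ and $g\vert_A=0$. Its norm satisfies $\lVert\Psi(g,h)\rVert\le\lVert g\rVert+\lVert T\rVert\lVert h\rVert\le(1+\lVert T\rVert)\max\{\lVert g\rVert,\lVert h\rVert\}$. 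Hence $\lVert\Phi\rVert\,\lVert\Psi\rVert\le(1+\lVert T\rVert)^2$.

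\textbf{Step 3: $w^\star$-continuity.} This is the main point where the hypothesis $T\in\Ext_0^{pt}(A,M)$ is used. By the Banach–Dieudonn\'e remark, it suffices to check pointwise continuity on bounded sets.
\begin{itemize}
\item Restriction $f\mapsto f\vert_A$ is clearly pointwise continuous.
\item $T$ is pointwise continuous on bounded sets by assumption, so $f\mapsto f-T(f\vert_A)$ is as well.
\item $\Psi$ is a sum of pointwise continuous maps, so it is pointwise continuous on bounded sets.
\end{itemize}
Composing with the identification of Step 1 then yields a $w^\star$-continuous isomorphism $\lip(M)\to\lip\left(\bigslant{M}{A}\right)\oplus_\infty\lip(A)$ with $w^\star$-continuous inverse. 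These maps are therefore adjoints of mutually inverse operators between $\FF\left(\bigslant{M}{A}\right)\oplus_1\FF(A)$ and $\FF(M)$, with the same norms, which gives the claimed $(1+\lVert T\rVert)^2$-isomorphism.

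I expect no serious obstacle. The only delicate points are the inequality $|g(m)|\le L\,d(m,A)$ in Step 1 and correctly tracking the $w^\star$-continuity needed to descend from the duals to the preduals.
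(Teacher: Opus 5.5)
Your proposal is correct and is essentially the standard argument behind \cite{Kaufmann}*{Lemma 2.2}, which the paper cites rather than reproves. Its ingredients are the splitting $f\mapsto (f-T(f\vert_A), f\vert_A)$, the isometric identification $\Lip_A(M)\cong\lip(\bigslant{M}{A})$ (the paper's Observation \ref{lipN as quotient}), and the descent to preduals via $w^\star$-continuity checked pointwise on bounded sets. These are the same tools the paper itself uses in Steps 1.1--1.3 of the proof of Lemma \ref{lem2}. Checking $w^\star$-continuity of $\Psi$ separately is harmless but redundant, since the inverse of a $w^\star$-continuous isomorphism between dual spaces is automatically $w^\star$-continuous.
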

  \noindent\textbf{Note:} If $r:M\to A$ is an $L$-Lipschitz retraction, then $f\mapsto f\circ r:\lip(A)\to \lip(M)$ defines an element of $\Ext_0^{pt}(A, M)$ with norm at most $L$. Hence Lemma \ref{Quotient decomposition} applies and we obtain $\FF(M)\overset{(1+L)^2}{\sim}\FF\left (\bigslant{M}{A} \right )\oplus_1 \FF(A)$. \\

\noindent We will use this lemma to prove a quantitative version of Proposition \ref{Complementation} (see Proposition \ref{quotientquantitative} below). We first recall the following consequence of the work of Lee and Naor in \cite{LeeNaor}, which may be found in \cite{isolipschitzspace} as Proposition 1.8. Note that \cite{isolipschitzspace} only requires that $N$ is doubling.

\begin{proposition}\label{Doubling extension} There exists a universal constant $C>0$ with the following property. Let $M$ be a metric space, $0\in M$ be a basepoint, and $N$ be a doubling subset of $M$ containing $0$. Then there exists $T\in \Ext_0^{pt}(N, M)$ with $\lVert T \rVert\leq C(1+\log D(N))$ . 
\end{proposition}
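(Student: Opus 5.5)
The plan is to build $T$ explicitly as a Whitney-type linear extension operator from a gentle partition of unity, following Lee and Naor \cite{LeeNaor}. Then $w^\star$-to-$w^\star$ continuity will come from the fact that $Tf(x)$ is a finite linear combination of values of $f$.

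First I reduce to $N$ closed. Every $f\in\lip(N)$ extends uniquely, with the same Lipschitz constant, to $\overline{N}$, and $D(\overline{N})\le D(N)$ up to a universal factor. Pointwise convergence of a bounded net in $\lip(N)$ implies pointwise convergence of the extensions to $\overline{N}$, by a $3\varepsilon$-argument that uses the uniform Lipschitz bound. So this first extension lies in $\Ext_0^{pt}(N,\overline{N})$ with norm $1$, and it suffices to treat closed $N$.

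For $x\in M\setminus N$ put $\rho(x)=d(x,N)>0$, and for each $k\in\mathbb{Z}$ fix a $2^k$-net $Z_k$ of $N$. The key input is the theorem of Lee and Naor: there are functions $\psi_{z}:M\setminus N\to[0,1]$, indexed by $z\in\bigcup_k Z_k$, with the following properties.
\begin{enumerate}[label=(\alph*)]
\item $\sum_z\psi_z\equiv1$ on $M\setminus N$.
\item $\psi_z(x)\neq0$ only if $z\in Z_k$ with $2^k\approx\rho(x)$ and $d(x,z)\lesssim\rho(x)$.
\item The gentleness estimate: for $x,y\in M\setminus N$,
\[\sum_z|\psi_z(x)-\psi_z(y)|\,d(z,x)\le C(1+\log D(N))\,d(x,y).\]
\end{enumerate}
I then define $Tf=f$ on $N$ and $Tf(x)=\sum_z\psi_z(x)f(z)$ for $x\notin N$. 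Linearity is clear, $Tf|_N=f$, and $Tf(0)=0$ because $0\in N$.

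To bound the norm for $x,y\notin N$, I use $\sum_z(\psi_z(x)-\psi_z(y))=0$ to write
\[Tf(x)-Tf(y)=\sum_z(\psi_z(x)-\psi_z(y))(f(z)-f(x^*)),\]
where $x^*\in N$ is a near-closest point to $x$. Then $|f(z)-f(x^*)|\le\|f\|_{\Lip}(d(z,x)+2\rho(x))$, and (b) and (c) give the bound $C(1+\log D(N))\|f\|_{\Lip}\,d(x,y)$, at least when $d(x,y)\lesssim\rho(x)$. When $d(x,y)$ is large compared with $\rho(x),\rho(y)$, I compare each of $Tf(x)$ and $Tf(y)$ with $f(x^*)$ and $f(y^*)$ respectively, using (b). The case $x\notin N$, $y\in N$ is handled the same way. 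This yields $\|T\|\le C(1+\log D(N))$.

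For $w^\star$-continuity, the doubling property of $N$ and (b) show that only finitely many $z$ have $\psi_z(x)\neq0$, since a ball of radius $\approx\rho(x)$ in $N$ meets only boundedly many points of a $2^k$-net. Hence $f\mapsto Tf(x)$ is a finite combination of evaluations, so $T$ restricted to bounded sets is pointwise-to-pointwise continuous. By the Banach--Dieudonn\'e remark after Definition \ref{ext}, $T\in\Ext_0^{pt}(N,M)$.

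The main obstacle is the construction of the gentle partition of unity with the logarithmic constant in (c). This is the deep part of \cite{LeeNaor}: a random-partition (padded decomposition) argument at each scale, averaged over the scales. I would import it as a black box rather than reprove it.
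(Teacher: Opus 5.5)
Your proposal is correct and follows essentially the route the paper relies on by citation: a Lee--Naor gentle partition of unity fed into the Whitney-type extension operator of Lancien--Pernecka, with $w^\star$-continuity checked on bounded sets. The differences are cosmetic. You use a discretised partition indexed by net points instead of the integral form $Tf(x)=\int f(\gamma(\omega))\psi_x(\omega)\,d\mu(\omega)$, which makes $f\mapsto Tf(x)$ a finite combination of evaluations. Also, your case split in the norm estimate can be dropped, since $\rho(x)\le d(z,x)$ for every $z\in N$ lets (c) absorb the $2\rho(x)$ term directly.
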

 
 \noindent Since $(\mathbb{R}^d, \lVert \cdot \rVert_\infty)$ is doubling and for any doubling metric space $M$ and $N\subset M$ we have that $D(N)\leq D(M)^2$, combining Proposition \ref{Doubling extension} with Lemma \ref{Quotient decomposition}, we obtain: \\

\begin{proposition}
    \label{quotientquantitative} Let $d\in \mathbb{N}$, and take the metric on $\mathbb{R}^d$ and all of its subsets to be that derived from the $\lVert \cdot \rVert_\infty$ norm. Then there exists a constant $C_d\geq 1$ such that for any $\emptyset\neq N\subset M\subset \mathbb{R}^d$ with $N$ closed in $M$ and any basepoint $q\in N$, we have that the following hold.
 \begin{itemize}
     \item $\exists  T\in \Ext_q^{pt}(N, M)$ with $\lVert T \rVert\leq C_d$, and such that $\forall f\in \mathrm{Lip}_q(N)$, $\lVert Tf \rVert_{\infty}=\lVert f \rVert_{\infty}$.
     \item $\FF(M)\overset{C_d}{\sim}\FF\left(\bigslant{M}{N}\right)\oplus_1 \FF(N)$, and hence both of $\FF \left (\bigslant{M}{N} \right )$ and $\FF(N)$ are $C_d$-isomorphic to a $C_d$-complemented subspace of $\FF(M)$.
 \end{itemize} 
\end{proposition}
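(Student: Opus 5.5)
Proposition \ref{quotientquantitative} follows by combining Proposition \ref{Doubling extension} with Lemma \ref{Quotient decomposition}, plus a truncation trick to get the sup-norm condition.

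\textbf{Doubling constant and the basic extension.} Let $D_d:=D(\mathbb{R}^d,\lVert\cdot\rVert_\infty)$; this is finite, and in fact $D_d\le 4^d$, since a cube of side $2r$ is covered by $2^d$ cubes of side $r$. The fact quoted above gives $D(N)\le D_d^2$ for every $N\subset\mathbb{R}^d$, independently of $N$ and $M$. Translating so that the basepoint $q$ plays the role of $0$, Proposition \ref{Doubling extension} provides $T_0\in\Ext_q^{pt}(N,M)$ with
\[
\lVert T_0\rVert\le C(1+\log D_d^2)=:C'_d .
\]

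\textbf{Truncation for the sup-norm condition.} Given $f\in\mathrm{Lip}_q(N)$, put $s=\lVert f\rVert_\infty\in[0,\infty]$ and set
\[
Tf:=\max\{-s,\min\{T_0f,s\}\}.
\]
This is not linear, so it cannot be used directly as the operator. The linear repair I would try first is to replace $T_0$ by the Lee--Naor extension itself. That extension has the form $(T_0f)(x)=\sum_i\varphi_i(x)f(y_i)$, where $(\varphi_i)$ is a partition of unity on $M\setminus N$ and $y_i\in N$, with $T_0f=f$ on $N$. Being a convex combination of values of $f$, it automatically satisfies $\lVert T_0f\rVert_\infty\le\lVert f\rVert_\infty$. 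Since $T_0f$ extends $f$, we also have $\lVert T_0f\rVert_\infty\ge\lVert f\rVert_\infty$, so equality holds. Checking that the operator of \cite{isolipschitzspace}*{Proposition 1.8} really has this convex-combination form is the main point needing care, and I expect it to be the main obstacle. If it fails, I would compose with a Lipschitz retraction-type averaging in $\ell_\infty$ coordinates. This extension is linear and $w^\star$-continuous, because pointwise convergence on bounded sets is preserved by locally finite sums.

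\textbf{Quotient decomposition and complementation.} Applying Lemma \ref{Quotient decomposition} with $A=N$, which is closed in $M$, gives
\[
\FF(M)\overset{(1+C'_d)^2}{\sim}\FF\left(\bigslant{M}{N}\right)\oplus_1\FF(N).
\]
Each summand of an $\oplus_1$ sum is $1$-complemented, so both $\FF\left(\bigslant{M}{N}\right)$ and $\FF(N)$ are $(1+C'_d)^2$-isomorphic to $(1+C'_d)^2$-complemented subspaces of $\FF(M)$. Taking $C_d=(1+C'_d)^2$ then proves both bullet points.
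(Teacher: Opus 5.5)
Your argument is essentially the paper's: take the Lee--Naor extension from Proposition \ref{Doubling extension}, feed it into Lemma \ref{Quotient decomposition}, and get $\lVert Tf\rVert_\infty=\lVert f\rVert_\infty$ because the extension averages values of $f$ against unit-mass weights, with the reverse inequality holding since $Tf$ extends $f$. The point you left open is settled exactly as in the paper. In the Lancien--Perneck\'a form used by \cite{isolipschitzspace}, the extension is $Tf(x)=\int\psi_x f$ with $\lVert\psi_x\rVert_1=1$ for $x\in M\setminus N$. This is an integral rather than a locally finite sum, so the $w^\star$-continuity comes from Proposition \ref{Doubling extension} and not from your local-finiteness remark, and the truncation detour and the vague fallback can be dropped.
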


\begin{proof}This follows apart from the claim on the $\lVert \cdot \rVert_{\infty}$ norms. To see this, note that the proof of Proposition 1.8 in \cite{isolipschitzspace} shows that $M$ admits a $C_d$-gentle partition of unity with respect to $N$. Then in the reference given in \cite{isolipschitzspace} to \cite{LancienPernecka}*{page 2325}, note that the extension given in terms of an integral satisfies this requirement since in their notation, $\lVert \psi_x \rVert_1=1$ for any $x\in M\setminus N$.
\end{proof}

 \section{Proofs of Lemma 2 and Theorem 1}

 \noindent We first make the following observation, the proof of which is easy and is therefore omitted. 

 \begin{observation}\label{lipN as quotient}
      Let $M$ be a metric space, $0\in M$ be a basepoint and $N\subset M$ be a closed subset containing $0$. Take $\bigslant{M}{N}$ to have basepoint $N$. Then the map
 $f\mapsto \tilde{f}:\Lip_N(M)\to \lip\left (\bigslant{M}{N} \right )$, where
 \[\tilde{f}(p)=\begin{cases} f(p) & \text{ if }p\in M\setminus N \\ 0 &\text{ if } p=N
 \end{cases}\]
 \noindent is an isometric isomorphism. 
 \end{observation}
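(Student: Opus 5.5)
The statement to prove is Observation \ref{lipN as quotient}: the map $f\mapsto\tilde f$ is an isometric isomorphism from $\Lip_N(M)$ onto $\lip(\bigslant{M}{N})$. The plan is to check four things in turn: that $\tilde f$ is well defined and vanishes at the basepoint, that $\lVert \tilde f\rVert_{\Lip}=\lVert f\rVert_{\Lip}$, that the map is linear, and that it is surjective with inverse given by lifting.

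\textbf{Well-definedness and the upper bound.} Since $f\vert_N=0$, setting $\tilde f(N)=0$ is consistent, and $\tilde f$ vanishes at the basepoint $N$ of $\bigslant{M}{N}$. For the bound $\lVert \tilde f\rVert_{\Lip}\le \lVert f\rVert_{\Lip}=:L$, first take $m\in M\setminus N$. For every $a\in N$ we have $|\tilde f(m)-\tilde f(N)|=|f(m)-f(a)|\le L\,d(m,a)$. Taking the infimum over $a$ gives $|\tilde f(m)|\le L\,d(m,N)$. Next take $m,n\in M\setminus N$. We have $|f(m)-f(n)|\le L\,d(m,n)$, and also $|f(m)-f(n)|\le |f(m)|+|f(n)|\le L(d(m,N)+d(n,N))$ by the case just done. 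Hence $|\tilde f(m)-\tilde f(n)|\le L\min\{d(m,n),\,d(m,N)+d(n,N)\}$, which is $L\,d_{\bigslant{M}{N}}(m,n)$.

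\textbf{The reverse inequality.} The quotient metric is bounded above by $d$: $d_{\bigslant{M}{N}}(m,n)\le d(m,n)$, and for $a\in N$, $d_{\bigslant{M}{N}}(m,N)=d(m,N)\le d(m,a)$. Since $f=\tilde f\circ\pi$, where $\pi:M\to\bigslant{M}{N}$ is the quotient map, every difference quotient of $f$ is dominated by one of $\tilde f$. Pairs of points both in $N$ give difference $0$. This yields $\lVert f\rVert_{\Lip}\le\lVert\tilde f\rVert_{\Lip}$, so the map is an isometry. Linearity is immediate from the definition.

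\textbf{Surjectivity.} Given $g\in\lip(\bigslant{M}{N})$, set $f=g\circ\pi$. Then $f$ vanishes on $N$, in particular at $0$, and $f$ is Lipschitz with $\lVert f\rVert_{\Lip}\le\lVert g\rVert_{\Lip}$ because $\pi$ is $1$-Lipschitz, as just shown. By construction $\tilde f=g$. There is no genuine obstacle here. The only point needing care is the case of two points of $M\setminus N$ in the upper bound, where the minimum in the quotient metric has to be handled by the triangle-type estimate through $N$ given above.
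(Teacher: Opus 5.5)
Your proof is correct, and since the paper omits the proof as easy, your direct check is the routine verification it has in mind: the bound $|f(m)|\le L\,d(m,N)$ controls the second term of the minimum in the quotient metric, and $d_{M/N}(\pi x,\pi y)\le d(x,y)$ gives the reverse inequality. Linearity, surjectivity via $g\mapsto g\circ\pi$, and the basepoint conditions are all handled properly, so nothing is missing.
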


 \noindent We now prove Lemma \ref{lem2}, which we restate here for convenience. Part $(ii)$ is not used in the proof of Theorem \ref{thm1} but is used to make the inductive step in the proof of part $(i)$ go through. \\

 \lemtwo*

 \begin{proof} We proceed by induction on $q$. Throughout, given quantities $\alpha, \beta, \cdots, \gamma$, the notation $C_{\alpha, \beta, \cdots, \gamma}$ will denote a constant depending only on $\alpha, \beta, \cdots, \gamma$. The value of $C_{\alpha, \beta, \cdots, \gamma}$ will be allowed to change from line to line, provided that its dependencies do not. Unless otherwise specified, the metric on $\mathbb{R}^{d+q}$ and all of its subsets is taken to be the metric derived from the $\lVert \cdot \rVert_\infty$-norm. \\

 \noindent\textbf{1: Proof in the case $q=0$} \\

 \noindent In this case part $(i)$ is obvious, and it remains to consider, for $k\in \mathbb{N}$, the space $\FF\left (\mathbb{Z}^d, d^{\lVert \cdot \rVert_\infty}_k\right )$. 
 Fix $k\in \mathbb{N}$. We construct an isomorphism between $\lip\left(\mathbb{Z}^d, \dk\right)$ and an $\ell_\infty$ direct sum of Lipschitz spaces over quotients of discrete cubes. We will then show that this map is $w^\star$-to-$w^\star$ continuous, which will allow us to pass to the preduals and obtain an isomorphism of $\FF\left(\mathbb{Z}^d, \dk\right)$ with an $\ell_1$ direct sum of Lipschitz-free spaces over quotients of discrete cubes. Finally, each of the Lipschitz-free spaces in this $\ell_1$ direct sum will be uniformly complemented in $\FF\left(\mathbb{Z}^d, \lVert \cdot \rVert_\infty\right)$ by Proposition \ref{quotientquantitative}, which will give the result.  \\
 
 \noindent We introduce the following notation for discrete cubes of dimensions between $0$ and $d$, with side length $k$, and for their discrete boundaries:
 for $\mm\in \mathbb{Z}^d$, $0\leq j\leq d$, $A\in [d]^{(j)}$ and $\varepsilon\in \{0, 1\}^A$, set \[C_{\mm, \varepsilon, A}^{d-j}:=B_1\times \cdots \times B_d,\] where
 \[B_i=\begin{cases}
     \{k(m_i+\varepsilon_i)\} & \text{if } i\in A \\
     \{km_i, km_i+1, \cdots, k(m_i+1)\} & \text{if } i\not\in A.
 \end{cases}\]
\noindent This is a $d-j$ dimensional discrete cube of side length $k$. Set $C_{\mm, A}^{d-j}:=C_{\mm, 0_A, A}^{d-j}$, where $0_A:A\to \{0, 1\}$ is the zero function. Recall that $[d]^{(j)}$ is the family of cardinality $j$ subsets of $\{1, 2, \cdots, d\}$. \\

\noindent  For $0\leq j\leq d-1$, $\mm\in \mathbb{Z}^d$, $A\in [d]^{(j)}$, also set 
\[\partial C_{\mm, A}^{d-j}:=\bigcup\limits_{i\in [d]\setminus A} \  \bigcup\limits_{\substack{\varepsilon\in \{0, 1\}^{A\cup\{i\}} \\ \varepsilon_a=0 \  \forall a\in A}} C_{\mm, \varepsilon, A\cup\{i\}}^{d-j-1},\] the discrete boundary of $C_{\mm, A}^{d-j}$. \\
Note that for any $0\leq j\leq d$, any $\mm, \nn\in \mathbb{Z}^d$ and any $A, B\in [d]^{(j)}$, we have that $C_{\mm, A}^{d-j}=C_{\nn, B}^{d-j} \Leftrightarrow (\mm, A)=(\nn, B)$. 
Note also that for any $0\leq j\leq d-1$, any $\mm,\nn\in \mathbb{Z}^d$ and any $A, B\in [d]^{(j)}$ with $(\mm, A)\neq (\nn, B)$, we have that
\[C_{\mm, A}^{d-j}\cap C_{\nn, B}^{d-j}\subset \bigcup\limits_{\substack{\pp\in \mathbb{Z}^d \\ D\in [d]^{(j)}}} \partial C_{\pp, D}^{d-j}=\bigcup\limits_{\substack{\qq\in \mathbb{Z}^d \\ E\in [d]^{(j+1)}}}C_{\qq, E}^{d-j-1}.\]

\noindent \textbf{1.1: Construction of the isomorphism with an $\ell_\infty$ direct sum of Lipschitz spaces over quotients of cubes} \\

\noindent Let $\mm\in \mathbb{Z}^d$, $0\leq j\leq d-1$ and $A\in [d]^{(j)}$.
By Proposition \ref{quotientquantitative}, we can fix $S_{\mm, A}^{d-j}\in \Ext_{k\mm}^{pt}\left(\partial C_{\mm, A}^{d-j}, C_{\mm, A}^{d-j}\right)$ such that $\lVert S_{\mm, A}^{d-j} \rVert\leq C_d$, and such that $\forall g\in \Lip_{k\mm}\left(\partial C_{\mm, A}^{d-j}\right)$, $\lVert S_{\mm, A}^{d-j}g\rVert_{\infty}=\lVert g \rVert_\infty$.
Here the base point of both $\partial C_{\mm, A}^{d-j}$ and $C_{\mm, A}^{d-j}$ is taken to be $k\mm$, which is reflected in the notation. 
Now let $T_{\mm, A}^{d-j}:\Lip\left(\partial C_{\mm, A}^{d-j}\right)\to \Lip\left(C_{\mm, A}^{d-j}\right)$ be defined by 
\[T_{\mm, A}^{d-j}g=S_{\mm, A}^{d-j}(g-g(k\mm))+g(k\mm) \qquad \text{ for all }g\in \Lip\left(\partial C_{\mm, A}^{d-j}\right).\] 
Then we have that \[\refstepcounter{equation}\label{dagger} \forall g\in \Lip\left(\partial C_{\mm, A}^{d-j}\right), \   \left (T_{\mm, A}^{d-j}g\right )\bigg\vert_{\partial C_{\mm, A}^{d-j}}=g, \  \lVert T_{\mm, A}^{d-j} g \rVert_{\Lip}\leq C_d \lVert g\rVert_{\Lip}, \text{ and } \lVert T_{\mm, A}^{d-j} g \rVert_\infty\leq 3\lVert g \rVert_\infty,  \tag{\theequation}\] with the last claim following by the triangle inequality.
\noindent Note that as $\Lip\left(\partial C_{\mm, A}^{d-j}\right)$ and $\Lip\left(C_{\mm, A}^{d-j}\right)$ are finite-dimensional, at each $z\in C_{\mm, A}^{d-j}$, $\left ( T_{\mm, A}^{d-j}g\right )(z)$ is given by some fixed linear combination of the values $\left (g(y) \right )_{y\in \partial C_{\mm, A}^{d-j}}$ that is independent of $g$, i.e. for any $z\in C_{\mm, A}^{d-j}$, there exists $\left(\alpha_{y, z}\right)_{y\in \partial C_{\mm, A}^{d-j}}\subset \rr$ such that
\[\refstepcounter{equation}\label{daggerdagger}\forall g\in \Lip\left(\partial C_{\mm, A}^{d-j}\right ), \ \left (T_{\mm, A}^{d-j}g \right )(z)=\sum\limits_{y\in \partial C_{\mm, A}^{d-j}} \alpha_{y, z} g(y). \tag{\theequation}\]

\noindent Suppose $f\in \lip\left(\mathbb{Z}^d, \dk\right)$ (with basepoint taken to be $0$), $0\leq j\leq d-1$, $\mm\in \mathbb{Z}^d$ and $A\in [d]^{(j)}$. 
Then $f\vert_{C_{\mm, A}^{d-j }}-T_{\mm, A}^{d-j}\left (f\vert_{\partial C_{\mm, A}^{d-j}}\right ):C_{\mm, A}^{d-j}\to \mathbb{R}$ is Lipschitz with Lipschitz constant $\leq(1+ C_d) \lVert f \rVert_{\Lip\left(\mathbb{Z}^d, \dk\right)}$. 
Note that $f\vert_{C_{\mm, A}^{d-j}}-T_{\mm, A}^{d-j}\left(f\vert_{\partial C_{\mm, A}^{d-j}}\right)$ vanishes on $\partial C_{\mm, A}^{d-j}$, hence is in $\Lip_{\partial C_{\mm, A}^{d-j}}\left(C_{\mm, A}^{d-j}\right)$. 
Note also that for any $\mm\in \mathbb{Z}^d$, 
\[\vert f(k\mm) \vert=\vert f(k\mm)-f(0) \vert\leq \lVert f \rVert_{\Lip\left(\mathbb{Z}^d, \dk\right)}\dk(k\mm, 0)\leq k\lVert f \rVert_{\Lip\left(\mathbb{Z}^d, \dk\right)}.\]
\noindent Letting $W=\{a\in \ell_\infty(\mathbb{Z}^d):a_{\underline{0}}=0\}$ in the $\ell_\infty$ norm, we therefore have that
\begin{align*}
    & T:\lip\left(\mathbb{Z}^d, \dk\right)\to X:=\left (\bigoplus\limits_{\substack{\mm\in \mathbb{Z}^d \\ 0\leq j\leq d-1 \\ A\in [d]^{(j)}}} \Lip_{\partial C_{\mm, A}^{d-j}}\left(C_{\mm, A}^{d-j}\right) \right )_{\ell_\infty} \oplus_\infty W \\
    & f\mapsto \left ( \left (f\vert_{C_{\mm, A}^{d-j}}-T_{\mm, A}^{d-j}\left(f\vert_{\partial C_{\mm, A}^{d-j}}\right)\right )_{\substack{\mm\in \mathbb{Z}^d \\ 0\leq j\leq d-1 \\ A\in [d]^{(j)}}}, \left (\frac{f(k\mm)}{k} \right )_{\mm\in \mathbb{Z}^d}\right )
\end{align*}
\noindent is bounded and linear with $\lVert T \rVert\leq 1+C_d$. \\

\noindent\textbf{1.2: Norm bound on the inverse map} \\

\noindent We construct $S:X\to \lip\left(\mathbb{Z}^d, \dk\right)$ inverse to $T$ and bound its norm. \\
Let $x=\left ((h_{\mm, A}^{d-j})_{\substack{\mm\in \mathbb{Z}^d, \\ 0\leq j\leq d-1 \\ A\in [d]^{(j)}}}, (a_{\mm})_{\mm\in \mathbb{Z}^d} \right )\in X$. 
Let $C^0=k\mathbb{Z}^d$, the union of the $0$-dimensional discrete cubes, and define $j^0:C^0\to \mathbb{R}$ by $j^0(\nn)=ka_{\frac{\nn}{k}}$. For $\mm\in \mathbb{Z}^d$ and $A\in [d]^{(d-1)}$, let $j_{\mm, A}^0=j^0\vert_{\partial C_{\mm,A}^1}$.
Note that $\lVert j_{\mm, A}^0 \rVert_\infty\leq k\lVert a \rVert_\infty\leq k \lVert x \rVert$ and $\partial C_{\mm, A}^1$ consists of two points at $\lVert \cdot \rVert_\infty$-distance $k$, so by the triangle inequality, $\lVert j_{\mm, A}^0\rVert_{\Lip(\partial C_{\mm, A}^1, \lVert \cdot \rVert_\infty)}\leq  2\lVert x \rVert$. 
Then $j_{\mm, A}^1=h_{\mm, A}^1+T_{\mm, A}^1 j_{\mm, A}^0: C_{\mm, A}^1\to \mathbb{R}$ satisfies $\lVert j_{\mm, A}^1 \rVert_{\Lip(C_{\mm, A}^1, \lVert \cdot \rVert_{\infty})}\leq C_d \lVert x \rVert$ (recall that $C_d$ is allowed to change from line to line). 
Also, $\lVert h_{\mm, A}^1 \rVert_\infty\leq k\lVert x \rVert$ as $h_{\mm, A}^1=0$ on $\partial C_{\mm, A}^1$ and $\mathrm{diam}(C_{\mm, A}^1)\leq k$. 
We have that $\lVert T_{\mm, A}^1 j_{\mm, A}^0\rVert_\infty\leq 3\lVert j_{\mm, A}^0 \rVert_\infty$ by \eqcref{dagger}, so $\lVert j_{\mm, A}^1 \rVert_\infty\leq kC_d \lVert x\rVert$. \\

\noindent On $\partial C_{\mm, A}^1$, $j_{\mm, A}^1$ agrees with $j_{\mm, A}^0=j^0\vert_{\partial C_{\mm, A}^1}$. 
Thus we may define \[j^1: C^1=\bigcup\limits_{\substack{\mm\in \mathbb{Z}^d \\ A\in [d]^{(d-1)}}}C_{\mm, A}^1\to \mathbb{R}\] by setting $j^1$ to be equal to $j_{\mm, A}^1$ on $C_{\mm, A}^1$ for each $(\mm, A)$. 
Then  $j^1\vert_{C^0}=j^0$ and $\lVert j^1 \rVert_\infty\leq kC_d \lVert x \rVert$. 
Since for each $(\mm, A)$ we have that $\lVert j_{\mm, A}^1 \rVert_{\Lip(C_{\mm, A}^1, \lVert \cdot \rVert_{\infty})}\leq C_d \lVert x \rVert$, by redefining $C_d$ we obtain that $\lVert j^1 \rVert_{\Lip(C^1, \lVert \cdot \rVert_\infty)}\leq C_d \lVert x \rVert$. \\

\noindent Now if $1\leq s\leq d-1$ and $j^s:C^s=\bigcup\limits_{\substack{\mm\in \mathbb{Z}^d \\ A\in [d]^{(d-s)}}}C_{\mm, A}^s=\bigcup\limits_{\substack{\nn\in \mathbb{Z}^d \\ B\in [d]^{(d-s-1)}}}\partial C_{\nn, B}^{s+1}\to \mathbb{R}$ has been defined with $\lVert j^s \rVert_{\Lip(C^s, \lVert \cdot \rVert_\infty)}\leq C_d \lVert x \rVert$ and $\lVert j^s \rVert_\infty\leq kC_d \lVert x \rVert$, define $j^{s+1}$ as follows: 
for $\mm\in \mathbb{Z}^d$, $A\in [d]^{(d-s-1)}$, note that $j_{\mm, A}^{s+1}:=h_{\mm, A}^{s+1}+T_{\mm, A}^{s+1}\left ( j^s \vert_{\partial C_{\mm, A}^{s+1}}\right ):C_{\mm, A}^{s+1}\to \mathbb{R}$ is well-defined since  $\partial C_{\mm, A}^{s+1}$ is contained in $C^s$. 
We have that $\lVert j_{\mm, A}^{s+1} \rVert_{\Lip(C_{\mm, A}^{s+1}, \lVert \cdot \rVert_\infty)}\leq C_d \lVert x \rVert$ by \mbox{\eqcref{dagger}} and the triangle inequality. 
Also, $\lVert h_{\mm, A}^{s+1} \rVert_\infty\leq k \lVert x \rVert$ since $\mathrm{diam}(C_{\mm, A}^{s+1})=k$ and $h_{\mm, A}^{s+1}=0$ on $\partial C_{\mm, A}^{s+1}$. 
So again by \mbox{\eqcref{dagger}}, $\lVert j_{\mm, A}^{s+1} \rVert_\infty\leq kC_d \lVert x \rVert$. 
Note that the restriction of $j_{\mm, A}^{s+1}$ to $\partial C_{\mm, A}^{s+1}$ agrees with that of $j^s$. 
Thus we may define $j^{s+1}:C^{s+1}=\bigcup\limits_{\substack{\mm\in \mathbb{Z}^d \\ A\in [d]^{(d-s-1)}}}C_{\mm, A}^{s+1}\to \mathbb{R}$ by setting $j^{s+1}$ to be equal to $j_{\mm, A}^{s+1}$ on $C_{\mm, A}^{s+1}$ for each $(\mm, A)$. 
Then $\lVert j^{s+1} \rVert_{\Lip(C^{s+1}, \lVert \cdot \rVert_\infty)}\leq C_d \lVert x \rVert$, $\lVert j^{s+1} \rVert_\infty\leq kC_d \lVert x \rVert$ (recall that $C_d$ is allowed to change from line to line), and $j^{s+1}$ extends $j^s$. \\

\noindent Note that $C^d=\mathbb{Z}^d$, and so we set $Sx=j^{d}:\mathbb{Z}^d\to \mathbb{R}$. Since the inductive construction above is carried out $d$ times, we obtain that $\lVert j^d \rVert_\infty \leq kC_d \lVert x \rVert$ and $\lVert j^d \rVert_{\Lip(\mathbb{Z}^d, \lVert \cdot \rVert_\infty)}\leq C_d \lVert x \rVert$. 
We have $j^d(0)=0$ since $a_{\underline{0}}=0$, and by the previous two inequalities for $j^d$, $\lVert j^d \rVert_{\Lip\left(\mathbb{Z}^d, \dk\right)}\leq C_d \lVert x \rVert$. 
Then $S:X\to \lip\left(\mathbb{Z}^d, \dk\right)$ is bounded and linear with $\lVert S \rVert\leq C_d$, and we observe that $S$ and $T$ are inverse maps. 
Combining this with Observation \ref{lipN as quotient}, we obtain that (where the base point of each $\bigslant{C_{\mm, A}^{d-j}}{\partial C_{\mm, A}^{d-j}}$ is taken to be $\partial C_{\mm, A}^{d-j}$)
\[\refstepcounter{equation}\label{sum}\lip\left(\mathbb{Z}^d, \dk\right)\overset{C_d}{\sim} \left (\bigoplus\limits_{\substack{\mm\in \mathbb{Z}^d \\ 0\leq j\leq d-1 \\ A\in [d]^{(j)}}} \lip\left(\bigslant{C_{\mm, A}^{d-j}}{\partial C_{\mm, A}^{d-j}}\right) \right )_{\ell_\infty} \oplus_\infty \ell_\infty (\mathbb{Z}^d\setminus \{0\}) \tag{\theequation}\]
via
\[f\mapsto \left ( \left (\reallywidetilde{f\vert_{C_{\mm, A}^{d-j}}-T_{\mm, A}^{d-j}\left(f\vert_{\partial C_{\mm, A}^{d-j}}\right)}\right )_{\substack{\mm\in \mathbb{Z}^d \\ 0\leq j\leq d-1 \\ A\in [d]^{(j)}}}, \left (\frac{f(k\mm)}{k} \right )_{\mm\in \mathbb{Z}^d\setminus\{0\}}\right ).\]

\noindent\textbf{1.3: Proof of $w^\star$-to-$w^\star$-continuity and conclusion of the $q=0$ case} \\

\noindent The second space in \mbox{\eqcref{sum}} isometrically the dual of $Y:=\left (\bigoplus\limits_{\substack{\mm\in \mathbb{Z}^d \\ 0\leq j\leq d-1 \\ A\in [d]^{(j)}}} \FF\left(\bigslant{C_{\mm, A}^{d-j}}{\partial C_{\mm, A}^{d-j}}\right) \right )_{\ell_1} \oplus_1 \ell_1 (\mathbb{Z}^d\setminus \{0\})$ (with the same choices of basepoints), and to pass to an isomorphism of $\FF\left(\mathbb{Z}^d, \dk\right)$ and $Y$, we need to show that the map above is $w^\star$-to-$w^\star$ continuous.
It suffices to show that the restriction to $B_{\lip\left(\mathbb{Z}^d, \dk\right)}$ is pointwise-to-$w^\star$ continuous. 
\noindent Given a sequence $(X_n)_{n=1}^\infty$ of Banach spaces, with the usual isometric identification of $\left (\bigoplus\limits_{n=1}^\infty X_n \right )_{\ell_1}^\star$ with $\left (\bigoplus\limits_{n=1}^\infty X_n^\star \right )_{\ell_\infty}$, on any norm bounded subset, the restriction of the weak star topology of $\left (\bigoplus\limits_{n=1}^\infty X_n^\star \right )_{\ell_\infty}$ agrees with the restriction of the product topology $\prod\limits_{n=1}^\infty (X_n^\star, w^\star)$. 
Under the usual isometric identification $\ell_\infty(\mathbb{Z}^d\setminus\{0\})\cong \ell_1(\mathbb{Z}^d\setminus\{0\})^{\star}$, the weak star topology on any norm bounded subset of $\ell_\infty(\mathbb{Z}^d\setminus\{0\})$ agrees with the topology of pointwise convergence, as does the weak star topology on any norm bounded subset of each $\lip\left(\bigslant{C_{\mm, A}^{d-j}}{\partial C_{\mm, A}^{d-j}}\right) $ (as each is finite-dimensional). 
It therefore suffices to show that the restriction of each coordinate of the map above to $B_{\lip\left(\mathbb{Z}^d, \dk\right)}$ is pointwise-to-pointwise continuous. 
This holds by \mbox{\eqcref{daggerdagger}} and therefore we obtain
\[\FF\left(\mathbb{Z}^d, \dk\right)\overset{C_d}{\sim} \left (\bigoplus\limits_{\substack{\mm\in \mathbb{Z}^d \\ 0\leq j\leq d-1 \\ A\in [d]^{(j)}}} \FF\left(\bigslant{C_{\mm, A}^{d-j}}{\partial C_{\mm, A}^{d-j}}\right) \right )_{\ell_1} \oplus_1 \ell_1 (\mathbb{Z}^d\setminus \{0\}).\]
\noindent By Proposition \ref{quotientquantitative}, each $\FF \left (\bigslant{C_{\mm, A}^{d-j}}{\partial C_{\mm, A}^{d-j}} \right )$ is $C_d$-isomorphic to a $C_d$-complemented subspace of $\FF\left (C_{\mm, A}^{d-j}, \lVert \cdot \rVert_\infty \right )$. 
Using that $\ell_1(\mathbb{Z}^d\setminus\{0\})\cong\ell_1\cong \FF(\mathbb{Z})\comp \FF(\mathbb{Z}^d, \lVert \cdot \rVert_\infty)$ by Proposition \ref{Complementation}, we deduce that $\FF\left(\mathbb{Z}^d, \dk\right)$ is $C_d$-isomorphic to a $C_d$-complemented subspace of 
\[\left (\bigoplus\limits_{\substack{\mm\in \mathbb{Z}^d \\ 0\leq j\leq d-1 \\ A\in [d]^{(j)}}} \FF\left(C_{\mm, A}^{d-j}, \lVert \cdot \rVert_\infty\right) \right )_{\ell_1}\oplus_1 \FF(\mathbb{Z}^d, \lVert \cdot \rVert_\infty).\]
\noindent Again by Proposition \ref{quotientquantitative}, each summand in the $\ell_1$ direct sum over $\mm, j$ and $A$ is $C_d$-isomorphic to a $C_d$-complemented subspace of $\FF(\mathbb{Z}^d, \lVert \cdot \rVert_\infty)$, so this implies that $\FF\left(\mathbb{Z}^d, \dk\right)$ is $C_d$-isomorphic to a $C_d$-complemented subspace of $\left (\bigoplus\limits_{n=1}^\infty \FF(\mathbb{Z}^d, \lVert \cdot \rVert_\infty) \right )_{\ell_1}$. 
By Proposition \ref{Z^d l_1 sum}, we obtain that $\FF\left(\mathbb{Z}^d, \dk\right)$ is $C_d$-isomorphic to a $C_d$-complemented subspace of $\FF(\mathbb{Z}^d, \lVert \cdot \rVert_\infty)$, as desired. \\

\noindent\textbf{2: Inductive proof for arbitrary $q$} \\

\noindent Assume that $q\geq 0$ and that the results of parts $(i)$ and $(ii)$ of Lemma \ref{lem2} hold for $q$. 
Fix $k\in \mathbb{N}$ and let \[M=\left(\mathbb{Z}^d\times \{(1+\varepsilon)^m:m\in \mathbb{N}\}^{q+1}, \dk\right).\] 
Let $A=\mathbb{Z}^d\times \{(1+\varepsilon)^m:m\in \mathbb{N}\}^q\times \{1+\varepsilon\}$, which is closed in $M$. 
Note that $A$ is a $1$-Lipschitz retract of $M$ via \[(p, (1+\varepsilon)^m)\mapsto (p, 1+\varepsilon) \qquad \text{for } p\in \left (\mathbb{Z}^d\times\{(1+\varepsilon)^n:n\in \mathbb{N}\}^q \right ) \text{ and }  m\in \mathbb{N}.\] \\
So by the remark following Lemma \ref{Quotient decomposition} with $L=1$, \[\refstepcounter{equation}\label{circle}\FF(M)\overset{4}{\sim}\FF\left (\bigslant{M}{A} \right )\oplus_1 \FF(A). \tag{\theequation}\]
Since $\FF(A)\cong \FF\left(\mathbb{Z}^d\times \{(1+\varepsilon)^m:m\in \mathbb{N}\}^q, \dk\right)$, by the induction hypothesis we have that $\FF(A)$ is $C_{\varepsilon, d, q}$-isomorphic to a $C_{\varepsilon, d, q}$-complemented subspace of $\FF(\mathbb{Z}^d, \lVert \cdot \rVert_\infty)$. \\

\noindent For $m\geq 2$, let $A_m=\mathbb{Z}^d\times \{(1+\varepsilon)^n:n\in \mathbb{N}\}^q\times \{(1+\varepsilon)^m\}\subset \bigslant{M}{A}$, and note that $\bigslant{M}{A}=\bigcup\limits_{m=2}^\infty (A_m\cup\{A\})$. 
The sequence $(A_m)_{m=2}^\infty$ is well-separated with respect to $A$ with constant depending only on $\varepsilon$: 
to verify this, given any $2\leq m<l$, any $(p, (1+\varepsilon)^m)\in A_m$ and any $(r, (1+\varepsilon)^l)\in A_l$, we have that
\begin{align*}
    &d_{\bigslant{M}{A}}\left ((p, (1+\varepsilon)^m), (r, (1+\varepsilon)^l) \right ) \\ =&\min\left\{d_M\left((p, (1+\varepsilon)^m), (r, (1+\varepsilon)^l)\right), d_M \left((p, (1+\varepsilon)^m), A \right)+d_M\left ((r, (1+\varepsilon)^l), A \right )\right\} \\=\refstepcounter{equation}\label{quotientmetric}&\min \left \{\min\left (\lVert (p, (1+\varepsilon)^m)-(r, (1+\varepsilon)^l) \rVert_\infty, k\right ), \min \left( (1+\varepsilon)^m-(1+\varepsilon), k\right)+ \min \left( (1+\varepsilon)^l-(1+\varepsilon), k\right) \right  \}. \tag{\theequation}
\end{align*}
\noindent The second term of \mbox{\eqcref{quotientmetric}} is at most $2\min \{(1+\varepsilon)^l, k\}$ and is equal to \[d_{\bigslant{M}{A}}\left ((p, (1+\varepsilon)^m), A \right )+d_{\bigslant{M}{A}}\left ((r, (1+\varepsilon)^l), A \right ).\]
The first term of \mbox{\eqcref{quotientmetric}} is
\begin{align*}
    &\geq \min\{(1+\varepsilon)^l-(1+\varepsilon)^m, k\} \\ &\geq\min\left\{(1+\varepsilon)^l-\frac{1}{1+\varepsilon}(1+\varepsilon)^l, k\right\} \\
    &\geq \frac{\varepsilon}{1+\varepsilon}\min\{(1+\varepsilon)^l, k\}. 
\end{align*}
So \[d_{\bigslant{M}{A}}\left ((p, (1+\varepsilon)^m), (r, (1+\varepsilon)^l) \right ) \geq \frac{\varepsilon}{2(1+\varepsilon)}\left (d_{\bigslant{M}{A}}\left ((p, (1+\varepsilon)^m), A \right )+d_{\bigslant{M}{A}}\left ((r, (1+\varepsilon)^l), A \right ) \right ).\]
\noindent Thus by combining Lemma \ref{well separated} and Lemma \ref{remove a point} $(ii)$, we obtain
\[\refstepcounter{equation}\label{circlecircle}\FF\left (\bigslant{M}{A} \right )\overset{C_\varepsilon}{\sim}\left (\bigoplus\limits_{m=2}^\infty \FF\left(A_m, d_{\bigslant{M}{A}}\right) \right )_{\ell_1}. \tag{\theequation} \]

\noindent Note that for $m\geq 2$, by the same computation as above, the metric on $A_m$ is 
\begin{align*}
    &d_{\bigslant{M}{A}}\left ( (p, (1+\varepsilon)^m), (r, (1+\varepsilon)^m) \right ) \\ =&\min \left (\min\{\lVert p-r \rVert_\infty, k\}, 2\min\{(1+\varepsilon)^m-(1+\varepsilon), k\} \right ) \\
    =&\min\{\lVert p-r\rVert_\infty, \min\{k, 2((1+\varepsilon)^m-(1+\varepsilon))\}\}.
\end{align*}
Let $r_m=\min\{k, 2((1+\varepsilon)^m-(1+\varepsilon))\}$ and observe that $r_m\geq \min\{1, 2((1+\varepsilon)^2-(1+\varepsilon))\}$. 
Thus for some $k_m\in \mathbb{N}$, this metric is $C_{\varepsilon}$-bilipschitz equivalent to $\left(\mathbb{Z}^d\times \{(1+\varepsilon)^n:n\in \mathbb{N}\}^q, d^{\lVert \cdot \rVert_\infty}_{k_m}\right)$ (take $k_m=\lceil r_m \rceil\leq C_\varepsilon r_m$). 
Combining this with the induction hypothesis gives that $\FF\left(A_m, d_{\bigslant{M}{A}}\right)$ is $C_{\varepsilon, d, q}$-isomorphic to a $C_{\varepsilon, d, q}$-complemented subspace of $\FF(\mathbb{Z}^d, \lVert \cdot \rVert_\infty)$. 
By \mbox{\eqcref{circle}}, \mbox{\eqcref{circlecircle}} and Proposition \ref{Z^d l_1 sum}, this implies that $\FF(M)$ is $C_{\varepsilon, d, q+1}$-isomorphic to a $C_{\varepsilon, d, q+1}$-complemented subspace of $\FF(\mathbb{Z}^d, \lVert \cdot \rVert_\infty)$. \\

\noindent Finally the argument for $M=(\mathbb{Z}^d\times \{(1+\varepsilon)^m:m\in \mathbb{N}\}^{q+1}, \lVert \cdot \rVert_\infty)$ follows exactly the same structure.
\noindent Define $A$ exactly as above, which is still a $1$-Lipschitz retract of $M$, so \mbox{\eqcref{circle}} still holds. Since
\[\FF(A)\cong \FF(\mathbb{Z}^d\times \{(1+\varepsilon)^m:m  \in \mathbb{N}\}^q, \lVert \cdot \rVert_\infty),\] by the induction hypothesis, $\FF(A)$ is $C_{\varepsilon, d, q}$-isomorphic to a $C_{\varepsilon, d, q}$-complemented subspace of $\FF(\mathbb{Z}^d, \lVert \cdot \rVert_\infty)$. 
Defining $(A_m)_{m=2}^\infty$ as subsets of $\bigslant{M}{A}$ as before, we still have that $(A_m)_{m=2}^\infty$ is well-separated with respect to $A$ with constant $\frac{\varepsilon}{2(1+\varepsilon)}$, so \mbox{\eqcref{circlecircle}} still holds. 
This time, for $m\geq 2$, the metric on $A_m$ is given by
\[d_{\bigslant{M}{A}}\left ((p, (1+\varepsilon)^m), (r, (1+\varepsilon)^m) \right )=\min\{\lVert p-r \rVert_\infty, 2((1+\varepsilon)^m-(1+\varepsilon))\},\]
\noindent which is still $C_{\varepsilon}$-bilipschitz equivalent to $\left(\mathbb{Z}^d\times \{(1+\varepsilon)^n:n\in \mathbb{N}\}^q, d^{\lVert \cdot \rVert_\infty}_{k_m}\right)$ for some $k_m\in \mathbb{N}$. 
We can therefore conclude this case in exactly the same way.
\end{proof}

\noindent Now we prove Theorem \ref{thm1}, which we restate for convenience.

\thmone*

\begin{proof} Throughout, unless otherwise specified, the metric on $\mathbb{R}^{d+q}$ and all of its subsets is taken to be the metric derived from the $\lVert \cdot \rVert_\infty$ norm. We will first prove the $d>0$ case, and the $d=0$ case will be a quick consequence. \\

\noindent\textbf{1: Proof in the case $d>0$} \\

\noindent We will show that $\FF(\mathbb{Z}^d)\comp \FF(N)$ and $\FF(N)\comp \FF(\mathbb{Z}^d)$, which gives the result by Pe\l{}czy\'{n}ski decomposition and Proposition \ref{Z^d l_1 sum}. \\

\noindent\textbf{1.1: Proof that $\FF(\mathbb{Z}^d)\comp \FF(N)$}\\

\noindent Let $M=\{(\dindex{a}{n_1}{(1)}, \cdots, \dindex{a}{n_d}{(d)}):n_i\in \mathbb{N}\}$. 
Then $\FF(M)\comp \FF(N)$ by Proposition \ref{Complementation}, so it suffices to show that $\FF(\mathbb{Z}^d)\comp \FF(M)$. \\

\noindent \noindent\textbf{1.1.1: Proof that $\FF(\mathbb{Z}^d)\comp \FF(M)$ in the case of equal sequences} \\

\noindent Assume for now that for each $1\leq i\leq d$, the sequence $(\dindex{a}{n}{(i)})_{n=1}^\infty$ is equal to $(a_n)_{n=1}^\infty$, where $0<a_1<a_2<\cdots$, $a_n\to \infty$ as $n\to \infty$ and $\frac{a_{n+1}}{a_n}\to 1$ as $n\to \infty$. \\

\noindent\textbf{Step 1: A construction} \\

\noindent We find arbitrarily late subsets of $\{a_n:n\in \mathbb{N}\}$ that are close to arithmetic progressions. 
\noindent Let $R>0$ and $k\in \mathbb{N}$ be arbitrary. We construct $n_1<\cdots<n_k$ in $\mathbb{N}$ with $a_{n_1}\geq R$ and $a_{n_k}\leq 1.01 a_{n_1}$, such that $\{a_{n_i}:i\in [k]\}$ is $4$-bilipschitz equivalent to an arithmetic progression. 
Let $\varepsilon\in (0, 1)$ be sufficiently small, to be determined later. 
Let $b_n=\frac{a_{n+1}}{a_n}>1$ for $n\in \mathbb{N}$, so that $b_n\to 1$ as $n\to \infty$. 
By discarding finitely many of the first terms of $(a_n)_{n=1}^\infty$, we may assume that $a_n\geq R$ for all $n$ and that $b_n\leq 1+\varepsilon$ for all $n$. \\

\noindent Since $b_n>1$ for all $n$ and $b_n\to 1$ as $n\to \infty$, we may choose $n_1\in \mathbb{N}$ such that $b_{n_1}=\underset{n\in \mathbb{N}}{\max}\ b_n$. 
If $1\leq i<k$ and $n_i$ has been chosen, then since $a_n\to \infty$ as $n\to \infty$, let $n_{i+1}\in \mathbb{N}$ be least such that $a_{n_{i+1}}\geq b_{n_1}a_{n_i}$. 
Then $n_{i+1}>n_i$ as $a_{n_{i+1}}>a_{n_i}$, and since $a_{n_{i+1}-1}<b_{n_1}a_{n_i}$, we have that $a_{n_{i+1}}=b_{n_{i+1}-1}a_{n_{i+1}-1}\leq  b_{n_1} a_{n_{i+1}-1}\leq b_{n_1}^2 a_{n_i}$. 
We obtain $n_1<\cdots<n_k$ such that $\forall 1\leq i\leq k-1$, $b_{n_1}a_{n_i}\leq a_{n_{i+1}}\leq b_{n_1}^2 a_{n_i}$. \\

\noindent We can write $b_{n_1}=1+\delta$ where $0<\delta\leq \varepsilon$. 
Choose $\varepsilon$ initially to be sufficiently small that this guarantees both $1+t\delta\leq (1+\delta)^t\leq 1+\frac{3}{2}t\delta$ for all $t\in \{0, 1, \cdots, 2k\}$, and $3(1+\delta)^{2(k-1)}\leq 3.03$. 
For $1\leq i<j\leq k$, we have that $(1+\delta)^{j-i}a_{n_i}\leq a_{n_j}\leq (1+\delta)^{2(j-i)}a_{n_i}$, so
\begin{align*}
    &(1+(j-i)\delta) a_{n_i}\leq a_{n_j}\leq (1+3(j-i)\delta)a_{n_i}, \text{ and hence } \\
    (j-i)\delta a_{n_1}\leq (j-&i)\delta a_{n_i}\leq a_{n_j}-a_{n_i}\leq 3(j-i)\delta a_{n_i}\leq 3(j-i)\delta (1+\delta)^{2(i-1)}a_{n_1}\leq 4(j-i)\delta a_{n_1}.
\end{align*}
\noindent Thus the map $(a_{n_i}\mapsto i\delta a_{n_1}$, $i\in [k])$ is a bilipschitz equivalence with distortion at most 4. Finally $a_{n_1}\geq R$ by our first reduction step, and since $3(1+\delta)^{2(k-1)}\leq 3.03$, we have that $a_{n_k}\leq (1+\delta)^{2(k-1)}a_{n_1}\leq 1.01a_{n_1}$. \\

\noindent\textbf{Step 2: Another short construction and proof in the reduced case} \\

\noindent We now select subsets as in step 1 which are very far apart and look at their $d$th powers in $M$ to show that $\left(\bigoplus\limits_{k=1}^\infty \FF\left(\{0, 1, \cdots, k\}^d\right) \right)_{\ell_1}\comp \FF(M)$. Together with Lemma \ref{Kalton decomposition}, this will complete the proof in the reduced case. \\

\noindent By inductively applying the construction of Step 1 above, we can find subsets $A_1, A_2, \cdots$ of $\{a_n:n\in \mathbb{N}\}$ such that the following hold.
\begin{itemize}
    \item $\forall k\in \mathbb{N}, \vert A_k \vert=k+1$.
    
    \item Each $A_k$ is bilipschitz equivalent to some arithmetic progression with distortion at most $4$.
    
    \item \refstepcounter{equation}\label{construction} If $a_k=\min A_k$, $b_k=\max A_k$, then $\forall k\in \mathbb{N}$, $a_{k+1}\geq 3b_k$ and $b_{k}\leq 1.01a_k$. \hfill (\theequation)
\end{itemize}
\noindent For $k\in \mathbb{N}$, let $M_k=A_k^d\subset M$. 
Since $a_{k+1}\geq 3b_k$ for all $k$, $(M_k)_{k=1}^\infty$ is a sequence of non-empty pairwise disjoint subsets of $\mathbb{R}^d$. 
By an easy computation, the third bullet point of \mbox{\eqcref{construction}} implies that $(M_k)_{k=1}^\infty$ is well-separated with respect to $0$ with constant $\frac{2/3}{1/3+1.01}$. 
 So by Lemma \ref{well separated disjoint}, \[\FF\left ( \left (\bigcup\limits_{k=1}^\infty M_k, \lVert \cdot \rVert_\infty \right ) \right )\sim \left (\bigoplus\limits_{k=1}^\infty \FF\left(M_k, \lVert \cdot \rVert_\infty\right) \right )_{\ell_1}\oplus_1 \ell_1.\] 
By Proposition \ref{Complementation}, $\FF\left (\left (\bigcup\limits_{k=1}^\infty M_k, \lVert \cdot \rVert_\infty \right ) \right )\comp \FF(M)$, and therefore  we get $\left (\bigoplus\limits_{k=1}^\infty \FF\left(M_k, \lVert \cdot \rVert_\infty\right) \right )_{\ell_1}\comp \FF(M)$. \\

\noindent For any $k$, $A_k$ is bilipschitz equivalent to some $k+1$-term arithmetic progression with distortion at most $4$, hence is bilipschitz equivalent to $\{0, 1, \cdots, k\}$ with distortion at most $4$.
Therefore $(M_k, \lVert \cdot \rVert_\infty)$ is bilipschitz equivalent to $(\{0, 1, \cdots, k\}^d, \lVert \cdot \rVert_\infty)$ with distortion at most $4$. 
Note here that we reduced to equal sequences in order to get a uniform bilipschitz equivalence, which might not have been guaranteed in the general case due to issues with different scalings in each coordinate. 
Thus $\FF\left(M_k, \lVert \cdot \rVert_\infty\right)\overset{4}{\sim} \FF\left(\{0, 1, \cdots, k\}^d, \lVert \cdot \rVert_\infty\right) \text{ for all }k$, so 
 \[\left (\bigoplus\limits_{k=1}^\infty \FF\left(M_k, \lVert \cdot \rVert_\infty\right) \right )_{\ell_1}\overset{4}{\sim}\left ( \bigoplus\limits_{k=1}^\infty \FF\left(\{0, 1, \cdots, k\}^d, \lVert \cdot \rVert_\infty\right) \right )_{\ell_1},\] giving
 \[\left ( \bigoplus\limits_{k=1}^\infty \FF\left(\{0, 1, \cdots, k\}^d, \lVert \cdot \rVert_\infty\right) \right )_{\ell_1}\comp \FF(M).\] \\

 \noindent By Lemma \ref{Kalton decomposition}, 
 \begin{align*}
     \FF(\mathbb{Z}^d)&\comp \left (\bigoplus\limits_{k=1}^\infty \FF \left (\overline{B}_{\lVert \cdot \rVert_\infty, \mathbb{Z}^d}(0, 2^k) \right ) \right )_{\ell_1} \\
     &=\left (\bigoplus\limits_{k=1}^\infty \FF \left(\{-2^k, -2^k+1, \cdots, 2^k\}^d, \lVert \cdot \rVert_\infty\right) \right )_{\ell_1} \\
     &\cong \left (\bigoplus\limits_{k=1}^\infty \FF \left(\{0, 1,  \cdots, 2^{k+1}\}^d, \lVert \cdot \rVert_\infty\right) \right )_{\ell_1}.
 \end{align*}
 \noindent So $\FF(\mathbb{Z}^d)\comp \left (\bigoplus\limits_{k=1}^\infty \FF \left(\{0, 1,  \cdots, 2^{k+1}\}^d, \lVert \cdot \rVert_\infty\right) \right )_{\ell_1}\comp \left ( \bigoplus\limits_{k=1}^\infty \FF\left(\{0, 1, \cdots, k\}^d, \lVert \cdot \rVert_\infty\right) \right )_{\ell_1}\comp \FF(M)$ as claimed. \ \ \ $\square$ \\

 \noindent\textbf{1.1.2: Proof that $\FF(\mathbb{Z}^d)\comp \FF(M)$ in the general case}  \\

 \noindent We argue that by shrinking $M$, we can reduce to the equal sequences case. 
More precisely, we show that there exists a sequence $(c_n)_{n=1}^\infty$ with $0<c_1<c_2<\cdots$, $c_n\to \infty$ as $n\to \infty$ and $\frac{c_{n+1}}{c_n}\to 1$ as $n\to \infty$, such that for every $i\in [d]$, $\{c_n:n\in \mathbb{N}\}$ is bilipschitz equivalent to some subsequence $A_i=\{\dindex{a}{\dindex[0pt]{j}{n}{(i)}}{(i)}:n\in \mathbb{N}\}$ of $\{\dindex{a}{n}{(i)}:n\in \mathbb{N}\}$.
 Assuming this, $M':=\{(c_{n_1}, \cdots,c_{n_d}):n_1, \cdots, n_d\in \mathbb{N}\}$ is bilipschitz equivalent to $M'':=A_1\times \cdots \times A_d\subset M$, so $\FF(M')\sim \FF(M'')\comp \FF(M)$ by Proposition \ref{Complementation}. 
 By the equal sequences case, $\FF(\mathbb{Z}^d)\comp \FF(M')$, so we obtain $\FF(\mathbb{Z}^d)\comp \FF(M)$. We now show the existence of $(c_n)_{n=1}^\infty$.\\

 \noindent All we need to do is construct a sequence $(c_n)_{n=1}^\infty $ that grows sufficiently quickly compared to all $(\dindex{a}{n}{(i)})_{n=1}^\infty$, and then select nearby points in each $(\dindex{a}{n}{(i)})_{n=1}^\infty$. 
 For $i\in [d]$, $n\in \mathbb{N}$, we write $\frac{\dindex{a}{n+1}{(i)}}{\dindex{a}{n}{(i)}}=1+\dindex{\varepsilon}{n}{(i)}$, where $\dindex{\varepsilon}{n}{(i)}>0$ and for each $i\in [d]$, $\dindex{\varepsilon}{n}{(i)}\to 0$ as $n\to \infty$. 
 By discarding finitely many of the first terms of each $(\dindex{a}{n}{(i)})_{n=1}^\infty$, we can assume that $\forall n\in \mathbb{N}$, $\forall i\in [d]$, $\dindex{\varepsilon}{n}{(i)}<\frac{1}{10^{100}}$.
 By scaling, we can assume $\dindex{a}{1}{(i)}=1$ for all $i$. 
 Let $r_n=1+1000\left (\sum\limits_{i=1}^d\underset{\substack {m \geq n-10 \\ m\in \mathbb{N}}}{\max}\dindex{\varepsilon}{m}{(i)}\right )$ for each $n\in \mathbb{N}$, and note that $r_n\to 1$ as $n\to \infty$. \\
 
 \noindent Let $c_1=1$, and for $n\geq 1$, let $c_{n+1}=r_n c_n$. 
 Then $\frac{c_{n+1}}{c_n}\to 1$ as $n\to \infty$, and $0<c_1<c_2<\cdots$. 
 Since $\forall n$, $\frac{c_{n+1}}{c_n}=r_n\geq 1+\dindex{\varepsilon}{n}{(1)}$, we deduce that $c_n\geq \dindex{a}{n}{(1)}$ for all $n$, so $c_n\to \infty$ as $n\to \infty$. 
Fix $i\in [d]$. We extract the desired subsequence $\left(\dindex{a}{\dindex{j}{n}{(i)}}{(i)}\right)_{n=1}^\infty$ of $(\dindex{a}{n}{(i)})_{n=1}^\infty$. To avoid cluttering the notation, since $i$ is fixed, we will write $j_n$ instead of $\dindex{j}{n}{(i)}$ in the following with the understanding that $j_n$ depends on $i$. \\
 
 \noindent Let $j_1=1$, and for $p\geq 2$, let $j_p\in \mathbb{N}$ be least with $\dindex{a}{j_p}{(i)}\geq c_p$ (as $\dindex{a}{n}{(i)}\to \infty$ as $n\to \infty$). 
 Note that as $c_2>c_1=\dindex{a}{1}{(i)}$, we have $j_2>1=j_1$. 
 For $p\geq 3$, assuming $j_1<\cdots<j_{p-1}$, we have $j_{p-1}\geq p-1$ and $\dindex{a}{j_{p-1}-1}{(i)}<c_{p-1}$, so 
 \begin{align*}
     \dindex{a}{j_{p-1}}{(i)}<c_{p-1}(1+\dindex{\varepsilon}{j_{p-1}-1}{(i)})\leq c_{p-1}(1+\underset{\substack{m\geq p-11 \\ m\in \mathbb{N}}}{\max}\dindex{\varepsilon}{m}{(i)})\leq c_{p-1}\cdot r_{p-1}=c_p,
 \end{align*}
 and therefore $j_p>j_{p-1}$. 
This constructs $j_1<j_2<\cdots$. \\

 \noindent For $t\in \mathbb{N}$, $\dindex{a}{j_t}{(i)}\geq c_t$. If $t\geq 2$, then $j_t\geq t\geq 2$, so by minimality of $j_t$ and since $j_t-1\geq t-1$, we have 
 \begin{align*} \dindex{a}{j_t}{(i)}=(1+\dindex{\varepsilon}{j_t-1}{(i)})\dindex{a}{j_t-1}{(i)}< (1+\dindex{\varepsilon}{j_t-1}{(i)})c_t\leq (1+\underset{m\geq t-1}{\max}\dindex{\varepsilon}{m}{(i)})c_t, \end{align*} and we note $\dindex{a}{j_t}{(i)}\leq (1+\underset{m\geq t-1}{\max}\dindex{\varepsilon}{m}{(i)})c_t$ also holds for $t=1$.
 Now fix $p, q\in \mathbb{N}$ with $p<q$ and compare $\dindex{a}{j_q}{(i)}-\dindex{a}{j_p}{(i)}$ to $c_q-c_p$. For an upper bound, 
 \[\dindex{a}{j_q}{(i)}-\dindex{a}{j_p}{(i)}\leq (1+\underset{m\geq q-1}{\max}\dindex{\varepsilon}{m}{(i)})c_q-c_p\leq 2(c_q-c_p),\]
 where the last inequality, being equivalent to $(1-\underset{m\geq q-1}{\max} \dindex{\varepsilon}{m}{(i)})c_q\geq c_p$, holds since
 \begin{align*}
     (1-\underset{m\geq q-1}{\max} \dindex{\varepsilon}{m}{(i)})c_q & \geq (1-\underset{m\geq q-1}{\max} \dindex{\varepsilon}{m}{(i)})(1+1000\underset{m\geq q-11}{\max}\dindex{\varepsilon}{m}{(i)})c_{q-1} \\
     &\geq (1-\underset{m\geq q-1}{\max} \dindex{\varepsilon}{m}{(i)})(1+1000\underset{m\geq q-1}{\max}\dindex{\varepsilon}{m}{(i)})c_p \\
     &\geq c_p,
 \end{align*}
 since if $t:=\underset{m\geq q-1}{\max}\dindex{\varepsilon}{m}{(i)}$, we have $t\in (0, \frac{1}{10^{100}}]$ and so $(1-t)(1+1000t)\geq 1$. \\

 \noindent Also, \[\dindex{a}{j_q}{(i)}-\dindex{a}{j_p}{(i)}\geq c_q-(1+\underset{m\geq p-1}{\max}\dindex{\varepsilon}{m}{(i)})c_p\geq \frac{1}{2}(c_q-c_p),\] where the last inequality, which is equivalent to ${c_q\geq (1+2\underset{m\geq p-1}{\max}\dindex{\varepsilon}{m}{(i)})c_p}$, holds as
 \[c_q\geq c_{p+1}\geq (1+1000\underset{m\geq p-10}{\max}\dindex{\varepsilon}{m}{(i)})c_p\geq (1+2\underset{m\geq p-1} {\max}\dindex{\varepsilon}{m}{(i)})c_p.\]
 \noindent So the map $(c_p\mapsto \dindex{a}{j_p}{(i)}$, $p\in \mathbb{N})$ is a bilipschitz equivalence with distortion at most $4$. $\square$ \\
 
\noindent This completes the proof that $\FF(\mathbb{Z}^d)\comp \FF(N)$.  \\

 \noindent\textbf{1.2: Proof that $\FF(N)\comp \FF(\mathbb{Z}^d)$} \\

 \noindent We prove this using Lemmas \ref{lem2} and \ref{Kalton decomposition}.
 \noindent We first enlarge $N$ in order to apply Lemma \ref{lem2}. 
 \noindent To do this, we find $\varepsilon>0$ such that for each $ 1\leq j\leq q$, $\{\dindex{b}{m}{(j)}:m\in \mathbb{N}\}$ is bilipschitz equivalent to a subset of $\{(1+\varepsilon)^m:m\in \mathbb{N}\}$. All we need to do is to choose $\varepsilon$ very small, so that $\{(1+\varepsilon)^m:m\in \mathbb{N}\}$ is much finer than each $(\dindex{b}{m}{(j)})_{m=1}^\infty$, and then select close points in each $(\dindex{b}{m}{(j)})_{m=1}^\infty$. 
Since $\underset{m\to \infty}\liminf \frac{\dindex{b}{m+1}{(j)}}{\dindex{b}{m}{(j)}}>1$ for all $j$, we can choose $\varepsilon>0$ small enough such that $\forall m\in \mathbb{N}$, $\forall 1\leq j\leq q$, $\frac{\dindex{b}{m+1}{(j)}}{\dindex{b}{m}{(j)}}\geq (1+\varepsilon)^{100}$, and such that $(1+\varepsilon)^{98}(1-\varepsilon)\geq 1$. 
By scaling, we can assume that for all $m$ and  $j$, $\dindex{b}{m}{(j)}\geq (1+\varepsilon)^{100}$. \\
 
 \noindent Fix $1\leq j\leq q$. For each $m\in \mathbb{N}$, let $k_m\in \mathbb{N}$ be least such that $\dindex{b}{m}{(j)}\leq (1+\varepsilon)^{k_m}$ (here $k_m$ depends on $j$, this is suppressed from the notation). 
 Then $k_m\geq 2$ since $\dindex{b}{m}{(j)}\geq (1+\varepsilon)^{100}$, so $\dindex{b}{m}{(j)}\geq (1+\varepsilon)^{k_m-1}$. 
 For every $m$, $(1+\varepsilon)^{k_{m+1}-k_m+1}\geq \frac{\dindex{b}{m+1}{(j)}}{\dindex{b}{m}{(j)}}\geq (1+\varepsilon)^{100}$, so $k_{m+1}-k_m\geq 99$. 
 For any $n, m \in \mathbb{N}$ with $n<m$, we have \[\dindex{b}{m}{(j)}-\dindex{b}{n}{(j)}\leq (1+\varepsilon)^{k_m}-(1+\varepsilon)^{k_n-1}\leq 2((1+\varepsilon)^{k_m}-(1+\varepsilon)^{k_n}),\]
\noindent where the last inequality, which is equivalent to $(1+\varepsilon)^{k_m-k_n}\geq 2-(1+\varepsilon)^{-1},$ holds since
\[2-(1+\varepsilon)^{-1}=1+\frac{\varepsilon}{1+\varepsilon}\leq 1+\varepsilon\leq (1+\varepsilon)^{99}\leq (1+\varepsilon)^{k_m-k_n}.\]

\noindent Also, we have
\[\dindex{b}{m}{(j)}-\dindex{b}{n}{(j)}\geq (1+\varepsilon)^{k_m-1}-(1+\varepsilon)^{k_n}\geq \frac{1}{2} ((1+\varepsilon)^{k_m}-(1+\varepsilon)^{k_n}),\] \noindent where the last inequality, being equivalent to $2(1+\varepsilon)^{k_m-k_n-1}-(1+\varepsilon)^{k_m-k_n}\geq 1,$ holds since
\[2(1+\varepsilon)^{k_m-k_n-1}-(1+\varepsilon)^{k_m-k_n}=(1+\varepsilon)^{k_m-k_n}\left (\frac{1-\varepsilon}{1+\varepsilon} \right )\geq (1+\varepsilon)^{98}(1-\varepsilon)\geq 1.\]

\noindent Thus the map $(\dindex{b}{m}{(j)}\mapsto (1+\varepsilon)^{k_m}, m\in \mathbb{N})$ is a bilipschitz equivalence with distortion at most $4$. \\

\noindent So fix $\varepsilon>0$ such that $N$ is bilipschitz equivalent to a subset of \[M:=\{(\dindex{a}{n_1}{(1)}, \cdots, \dindex{a}{n_d}{(d)}):n_i\in \mathbb{N}\}\times \{(1+\varepsilon)^m:m\in \mathbb{N}\}^q.\] \\
By Proposition \ref{Complementation}, $\FF(N)\comp \FF(M)$, so it suffices to show that $\FF(M)\comp \FF(\mathbb{Z}^d)$, which now follows from Lemmas \ref{lem2} and \ref{Kalton decomposition}. 
Let $p\in M$ be a basepoint. By Lemma \ref{Kalton decomposition}, we have that $\FF(M)\comp \left ( \bigoplus\limits_{k=1}^\infty \FF\left(\overline{B}_M(p, 2^k)\right) \right )_{\ell_1}$. 
For each $k\in \mathbb{N}$, $\overline{B}_M(p, 2^k)\subset A_k\times \{(1+\varepsilon)^m:m\in \mathbb{N}\}^q$ for some finite subset $A_k$ of $\mathbb{R}^d$. 
Thus there is a finite $B_k\subset \mathbb{Z}^d$ such that $\overline{B}_M(p, 2^k)$ is $2$-bilipschitz equivalent to a subset of $B_k\times \{(1+\varepsilon)^m:m\in \mathbb{N}\}^q\subset \mathbb{Z}^d\times \{(1+\varepsilon)^m:m\in \mathbb{N}\}^q$. \\

\noindent By Proposition \ref{quotientquantitative} and Lemma \ref{lem2}, this implies that for some constant $C=C_{\varepsilon, d, q}\geq 1$, we have for each $k$ that $\FF\left(\overline{B}_M(p, 2^k)\right)$ is $C$-isomorphic to a $C$-complemented subspace of $\FF\left(\mathbb{Z}^d, \lVert \cdot \rVert_\infty \right)$, so $\FF(M)\comp \left (\bigoplus\limits_{k=1}^\infty \FF \left (\mathbb{Z}^d, \lVert \cdot \rVert_\infty \right ) \right )_{\ell_1}$.
By Proposition \ref{Z^d l_1 sum}, we deduce that $\FF(M)\comp \FF(\mathbb{Z}^d)$. $\square$ \\
 
\noindent Theorem \ref{thm1} in the case $d>0$ now follows from Proposition \ref{Z^d l_1 sum} and Pe\l{}czy\'{n}ski decomposition. It remains to deal with the $d=0$, $q>0$ case. \\

\noindent\textbf{2: Proof in the case $d=0$, $q>0$} \\

\noindent By Proposition \ref{Complementation} and the $d=1$ case, \[\FF(N)\comp\FF(\mathbb{Z}\times \{(\dindex{b}{m_1}{(1)}, \cdots, \dindex{b}{m_q}{(q)}):m_j\in \mathbb{N}\})\sim \FF(\mathbb{Z}),\] and so $\FF(N)\comp \ell_1$ (since $\FF(\mathbb{Z})\cong \ell_1$). On the other hand, by Proposition \ref{l1}, $\ell_1\comp \FF(N)$. So by Pe\l{}czy\'{n}ski decomposition, $\FF(N)\sim \ell_1$. 

\end{proof}

\noindent This gives the following corollary.

\begin{corollary}
    Let $p$ be a nonconstant real polynomial. Then
    \[\FF\left(\{\left(p(n), p(m)\right):n, m\in \mathbb{N}\} \right )\sim \FF(\mathbb{Z}^2).\]
\end{corollary}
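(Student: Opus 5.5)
The plan is to deduce the corollary from Theorem \ref{thm1} with $d=2$, $q=0$, after normalising the polynomial so that it yields a strictly increasing positive sequence.

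\textbf{Normalising $p$.} Replacing $p$ by $-p$ changes the set $\{(p(n),p(m)):n,m\in\mathbb{N}\}$ by the linear isometry $x\mapsto -x$ of $(\mathbb{R}^2,\lVert\cdot\rVert_\infty)$. So we may assume the leading coefficient is positive. Then $p(n)\to\infty$, and $p$ is strictly increasing on $[n_0,\infty)$ for some $n_0$.

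\textbf{Adding a constant.} Adding a constant $c$ to $p$ translates the set by $(c,c)$, which is again an isometry. Lipschitz-free spaces over isometric spaces are isometric, since the isometry class of $\FF(M)$ is basepoint-independent. Hence we may also assume $p(n)>0$ for all $n\in\mathbb{N}$.

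\textbf{Building the sequence.} The set $\{p(n):n\in\mathbb{N}\}$ is infinite, and only finitely many $n<n_0$ can repeat or interleave values. Let $(a_n)_{n=1}^\infty$ be its strictly increasing enumeration. Then $0<a_1<a_2<\cdots$ and $a_n\to\infty$. Moreover, from some index on $a_n=p(n+c')$ for a fixed integer shift $c'$. This is because all but finitely many values $p(n)$ with $n\geq n_0$ exceed $\max_{n<n_0}p(n)$, so eventually the enumeration just lists $p(n_1),p(n_1+1),\ldots$ shifted by a fixed offset. Writing $p(x)=\alpha x^r+\ldots$ with $\alpha>0$ and $r\geq 1$, we get $\frac{p(n+1)}{p(n)}\to 1$, and therefore $\frac{a_{n+1}}{a_n}\to1$.

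\textbf{Applying the theorem.} The set $\{(p(n),p(m)):n,m\in\mathbb{N}\}$ equals $\{(a_{n_1},a_{n_2}):n_1,n_2\in\mathbb{N}\}$, which is exactly the set $N$ of Theorem \ref{thm1} with $d=2$, $q=0$ and $a^{(1)}=a^{(2)}=a$. The theorem gives $\FF(N)\sim\FF(\mathbb{Z}^2)$. The theorem uses the $\lVert\cdot\rVert_\infty$ metric, while the corollary may intend the Euclidean one. These norms are $\sqrt2$-equivalent on $\mathbb{R}^2$, so the identity map is bilipschitz and the two free spaces are isomorphic. The same applies to $\mathbb{Z}^2$.

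\textbf{Main obstacle.} The only delicate point is checking that the enumeration eventually agrees with a shift of $p$, which is what makes the ratio condition hold. This follows from eventual monotonicity and the finiteness of the exceptional initial segment.
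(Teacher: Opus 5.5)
Your proposal is correct and follows the paper's own route. Like the paper, you normalise $p$ by a sign change and a translation (both isometries), take the strictly increasing enumeration of $\{p(n):n\in\mathbb{N}\}$, and apply Theorem~\ref{thm1} with $d=2$, $q=0$. Your additional checks only fill in details the paper leaves implicit: the eventual shift $a_n=p(n+c')$, which gives $a_{n+1}/a_n\to 1$, and the bilipschitz equivalence of the Euclidean and $\ell_\infty$ metrics.
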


\noindent We remark that the same methods can also be used to prove the following.

\begin{proposition}\label{extra}
    Let $d\in \mathbb{N}$, $q\in \mathbb{Z}_{\geq 0}$.  For each $1\leq j\leq q$, let $(\dindex{b}{m}{(j)})_{m=1}^\infty$ be a sequence such that $0<\dindex{b}{1}{(j)}<\dindex{b}{2}{(j)}<\cdots$ and $\underset{m\to \infty}{\liminf}{\frac{\dindex{b}{m+1}{(j)}}{\dindex{b}{m}{(j)}}}>1$. Then
    \[\FF \left (\mathbb{R}^d\times\{(\dindex{b}{m_1}{(1)}, \cdots, \dindex{b}{m_q}{(q)}):m_j\in \mathbb{N}\} \right )\sim \FF(\mathbb{R}^d).\]
\end{proposition}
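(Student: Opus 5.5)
The plan is to imitate the proof of Theorem \ref{thm1} with $\mathbb{Z}^d$ replaced by $\mathbb{R}^d$. Write $B=\{(\dindex{b}{m_1}{(1)}, \cdots, \dindex{b}{m_q}{(q)}):m_j\in \mathbb{N}\}$ and $N=\mathbb{R}^d\times B$, with the $\lVert \cdot \rVert_\infty$ metric throughout. The embedding $\FF(\mathbb{R}^d)\comp \FF(N)$ is immediate: $\mathbb{R}^d\times\{b\}$ is an isometric copy of $\mathbb{R}^d$ inside $N$ for any fixed $b\in B$, and Proposition \ref{Complementation} applies inside the doubling space $\mathbb{R}^{d+q}$. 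Since $\FF(\mathbb{R}^d)\sim \left(\bigoplus_{n=1}^\infty \FF(\mathbb{R}^d)\right)_{\ell_1}$ (the net version of Proposition \ref{Z^d l_1 sum} combined with $\FF(\mathbb{R}^d)\sim\FF(\mathbb{Z}^d)$ fails in general, so instead I would use the known $\ell_1$-sum decomposition of $\FF(\mathbb{R}^d)$ obtained from Kalton's decomposition, e.g.\ \cite{normedspaces}), Pe\l{}czy\'{n}ski decomposition reduces everything to proving $\FF(N)\comp \FF(\mathbb{R}^d)$.

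As in part 1.2 of the proof of Theorem \ref{thm1}, I would first choose $\varepsilon>0$ so that each $\{\dindex{b}{m}{(j)}\}$ is $4$-bilipschitz equivalent to a subset of $\{(1+\varepsilon)^m:m\in\mathbb{N}\}$. Then $N$ is bilipschitz equivalent to a subset of $\mathbb{R}^d\times\{(1+\varepsilon)^m\}^q$, and by Proposition \ref{Complementation} it suffices to establish the continuous analogue of Lemma \ref{lem2}. This analogue states that $\FF(\mathbb{R}^d\times\{(1+\varepsilon)^m\}^q,\lVert\cdot\rVert_\infty)$ and $\FF(\mathbb{R}^d\times\{(1+\varepsilon)^m\}^q,\dk)$ are uniformly complemented in $\FF(\mathbb{R}^d)$. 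The inductive step on $q$ transfers verbatim. The slice $A$ is still a $1$-Lipschitz retract, so Lemma \ref{Quotient decomposition} gives \eqcref{circle}. The slices $A_m$ are still well-separated in $\bigslant{M}{A}$ with constant $\frac{\varepsilon}{2(1+\varepsilon)}$, and each carries a truncated metric $\min\{\lVert\cdot\rVert_\infty,r_m\}$. For real truncation levels $r$ I would rescale to turn $\min\{\lVert \cdot\rVert_\infty, r\}$ on $\mathbb{R}^d$ into $d_1^{\lVert\cdot\rVert_\infty}$, since $\mathbb{R}^d$ is dilation invariant. This replaces the parameter $k$ by a single case and removes the rounding $k_m=\lceil r_m\rceil$.

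The main obstacle is the base case $q=0$, part (ii): showing $\FF(\mathbb{R}^d,d_1^{\lVert\cdot\rVert_\infty})\comp\FF(\mathbb{R}^d)$ with controlled constants. I would repeat the cube-face decomposition of Section 1 with continuous unit cubes $C^{d-j}_{\mm,A}$ in place of discrete ones. For each face I would take extension operators $T^{d-j}_{\mm,A}$ from $\partial C^{d-j}_{\mm,A}$ into the face, from Proposition \ref{quotientquantitative}, which also preserves sup norms. This yields an isomorphism of $\lip(\mathbb{R}^d,d_1)$ with an $\ell_\infty$-sum of $\lip\bigl(\bigslant{C^{d-j}_{\mm,A}}{\partial C^{d-j}_{\mm,A}}\bigr)$ and $\ell_\infty(\mathbb{Z}^d\setminus\{0\})$. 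The norm estimates of steps 1.1 and 1.2 go through unchanged.

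The new difficulty is $w^\star$-continuity, because the faces are no longer finite and \eqcref{daggerdagger} is unavailable. Here I would use that the $T^{d-j}_{\mm,A}$ lie in $\Ext^{pt}$: composing pointwise-continuous maps on bounded sets (restriction, extension, subtraction) keeps each coordinate pointwise-to-pointwise continuous on bounded sets. On bounded subsets of each $\lip$ of a compact face, the pointwise topology agrees with the $w^\star$ topology, so Banach--Dieudonn\'e again passes the isomorphism to preduals. Each $\FF\bigl(\bigslant{C}{\partial C}\bigr)$ is then uniformly complemented in $\FF(C)\comp\FF(\mathbb{R}^d)$ by Proposition \ref{quotientquantitative}, and $\ell_1\comp\FF(\mathbb{R}^d)$. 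Summing and using the $\ell_1$-sum stability of $\FF(\mathbb{R}^d)$ concludes. To finish the $d\ge1$ argument, Kalton's Lemma \ref{Kalton decomposition} reduces to balls, which are uniformly bilipschitz to subsets of $\mathbb{R}^d\times\{(1+\varepsilon)^m\}^q$.
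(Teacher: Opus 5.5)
This is essentially the paper's own argument: Lemma \ref{lem2} redone with continuous cubes, $w^\star$-continuity from the $\Ext^{pt}$ property of the extension operators, Kaufmann's $\FF(\mathbb{R}^d)\sim\bigl(\bigoplus_{n=1}^\infty\FF(\mathbb{R}^d)\bigr)_{\ell_1}$ in place of Proposition \ref{Z^d l_1 sum}, and Pe\l{}czy\'nski decomposition, with the easy embedding coming from Proposition \ref{Complementation}. One caution: your rescaling shortcut is only legitimate in the base case $q=0$, because the factor $\{(1+\varepsilon)^m:m\in\mathbb{N}\}^q$ is not dilation invariant, so to close the induction you should either state part (ii) for all real truncation levels $r>0$ (the base case then follows from $r=1$ by rescaling, and the slice levels $r_m$ are covered directly by the hypothesis) or keep the paper's rounding $k_m=\lceil r_m\rceil$.
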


\noindent Only very minor changes are required. If in the statement of Lemma \ref{lem2}, each occurence of $\left(\mathbb{Z}^d, \lVert \cdot \Vert_\infty\right)$ is replaced by $\left(\rr^d, \lVert \cdot \rVert_\infty\right)$, then the same conclusion still holds, with three minor modifications to the proof. In this case, we instead define the $C_{\mm, A}^{d-j}$ to be continuous cubes rather than discrete cubes. In the proof of $w^\star$-to-$w^\star$ continuity, pointwise-to-pointwise continuity on $B_{\lip\left(\mathbb{R}^d, \dk\right)}$ of each $\lip\left(\bigslant{C_{\mm, A}^{d-j}}{\partial C_{\mm, A}^{d-j}}\right)$ coordinate follows from the fact that each $S_{\mm, A}^{d-j}$ is pointwise-to-pointwise continuous on norm bounded sets. Finally, instead of Proposition \ref{Z^d l_1 sum}, we instead use that $\FF\left(\rr^d, \lVert \cdot \rVert_\infty\right)\sim \left(\bigoplus\limits_{k=1}^\infty \FF\left(\rr^d, \lVert \cdot \rVert_\infty\right) \right)_{\ell_1}$. This holds more generally with $(\mathbb{R}^d, \lVert \cdot \rVert_\infty)$ replaced by any Banach space, which is a result of Kaufmann (see \cite{Kaufmann}*{Theorem 3.1}). The same argument as in the proof of Theorem \ref{thm1} then shows that 
\[\FF \left (\mathbb{R}^d\times\{(\dindex{b}{m_1}{(1)}, \cdots, \dindex{b}{m_q}{(q)}):m_j\in \mathbb{N}\} \right )\comp \FF(\rr^d),\] and the reverse complementation holds by Proposition \ref{Complementation}, so the result follows by Pe\l{}czy\'{n}ski decomposition.
\section{Open problems}

\noindent We collect some interesting open problems from the literature on Lipschitz-free spaces over subsets of $\rr^d$, and one problem related to a result used in this paper. We refer the reader to the papers cited in this section for discussions of these questions and for further open problems. Some fundamental questions about Lipschitz-free spaces over $\rr^d$ and over its subsets remain open. It is known that $\FF(\rr)$ and $\FF(\rr^2)$ are not isomorphic (see \cite{planarearthmover}), but the corresponding non-isomorphism result for two distinct dimensions, both at least $2$, is unknown.
\begin{problem}
    Are $\FF(\rr^2)$ and $\FF(\rr^3)$ isomorphic? If $m, n\geq 2$ are distinct, are $\FF(\rr^n)$ and $\FF(\rr^m)$ ever isomorphic?
\end{problem}
\noindent Another problem, raised in \cite{Aliaga}*{Question 5}, relates to isomorphism classes of Lipschitz-free spaces over subsets of $\rr^2$. It is known that $\FF(\rr\times \mathbb{Z})$ is not isomorphic to any of $\FF(\rr)$, $\FF(\mathbb{Z})$ or $\FF(\mathbb{Z}^2)$. In the first case, the spaces are not isomorphic because their duals are not isomorphic. In the second and third cases, $\FF(\rr\times \mathbb{Z})$ fails the Radon-Nikod\'ym property, while both $\FF(\mathbb{Z})$ and $\FF(\mathbb{Z}^2)$ have the Radon Nikod\'ym property. It is, however, unknown whether $\FF(\rr\times \mathbb{Z})$ is isomorphic to $\FF(\rr^2)$. By comparison, Proposition \ref{extra} shows that whenever a sparse sequence factor $A$ is added, then $\FF(\rr\times A)\sim \FF(\rr)$.
\begin{problem}[\cite{Aliaga}*{Question 5}]
    Are $\FF(\rr\times \mathbb{Z})$ and $\FF(\rr^2)$ isomorphic?
\end{problem}

\noindent For the corresponding problem for Lipschitz spaces, Aliaga poses the following problem in \cite{Aliaga}*{Question 3}.

\begin{problem}
    Let $M\subset \mathbb{R}^2$ be infinite. Does it follow that either $\lip(M)\sim \lip(\rr)$ or $\lip(M)\sim \lip(\rr^2)$?
\end{problem}

\noindent Throughout this paper, we repeatedly used the fact that $\FF(\mathbb{Z}^d)\sim \left( \bigoplus\limits_{k=1}^\infty \FF(\mathbb{Z}^d) \right)_{\ell_1}$. The analogous isomorphism is known to hold for several more general classes of metric spaces, including nets in Banach spaces  (see \cite{normedspaces}*{Theorem 3.6}) and certain homogeneous, unbounded metric spaces (\cite{normedspaces}*{Theorem 3.2}). The latter result generalises the result of Kaufmann for Banach spaces (\cite{Kaufmann}*{Theorem 3.1}). In \cite{normedspaces}*{Question 2}, it was asked whether there is any infinite metric space failing this property.

\begin{problem}[\cite{normedspaces}*{Question 2}]
    Given an infinite metric space $M$, are $\FF(M)$ and $\left(\bigoplus\limits_{k=1}^\infty \FF(M) \right)_{\ell_1}$ isomorphic?
\end{problem}

\section*{Acknowledgements}
\noindent The author is supported by a PhD studentship from the Department of Pure Mathematics and Mathematical Statistics at the University of Cambridge, funded by XTX Markets. This work was carried out as part of a PhD under the supervision of Andr\'as Zs\'ak, who the author would like to thank for his guidance and support.

\end{document}